\numberwithin{equation}{section}
\newtheorem{theorem}{Theorem}[section]
\newtheorem{proposition}[theorem]{Proposition}
\newtheorem{example}[theorem]{Example}
\newtheorem{remark}[theorem]{Remark}
\newtheorem{defn}[theorem]{Definition}
\theoremstyle{definition}
\newcommand{\maj}{{\mathrm {maj}}}
\newcommand{\GL}{{\mathrm {GL}}}
\newcommand{\WComp}{{\mathrm {WComp}}}
\newcommand{\Comp}{{\mathrm {Comp}}}
\newcommand{\Hilb}{{\mathrm {Hilb}}}
\newcommand{\grFrob}{{\mathrm {grFrob}}}
\newcommand{\des}{{\mathrm {des}}}
\newcommand{\SYT}{{\mathrm {SYT}}}
\newcommand{\Frob}{{\mathrm {Frob}}}
\newcommand{\shape}{{\mathrm {shape}}}
\newcommand{\symm}{{\mathfrak{S}}}
\newcommand{\CC}{{\mathbb {C}}}
\newcommand{\QQ}{{\mathbb {Q}}}
\newcommand{\ZZ}{{\mathbb {Z}}}
\newcommand{\xx}{{\mathbf {x}}}
\newcommand{\II}{{\mathbf {I}}}
\newcommand{\TT}{{\mathbf {T}}}
\begin{document}

\title[Cyclic sieving and orbit harmonics]
{Cyclic sieving and orbit harmonics}

\author{Jaeseong Oh}
\address
{Department of Mathematical Sciences \newline \indent
Seoul National University \newline \indent
1 Gwanak-ro, Gwanak-gu,
Seoul, 08826, South Korea}
\email{jaeseong$\_$oh@snu.ac.kr}

\author{Brendon Rhoades}
\address
{Department of Mathematics \newline \indent
University of California, San Diego \newline \indent
9500 Gilman Dr., La Jolla, CA, 92093, USA}
\email{bprhoades@math.ucsd.edu}

\begin{abstract}
Orbit harmonics is a tool in combinatorial representation theory which promotes the
(ungraded) action
 of a linear group $G$ on a finite set $X$ to a graded action of $G$ on a polynomial ring quotient
 by viewing $X$ as a $G$-stable point locus in $\CC^n$.
 The cyclic sieving phenomenon is a notion in enumerative combinatorics which encapsulates 
 the fixed-point structure of the action of a finite cyclic group $C$ on a finite set $X$ in terms of root-of-unity
 evaluations of an auxiliary polynomial $X(q)$.
We apply orbit harmonics to prove cyclic sieving results.
\end{abstract}

\keywords{cyclic sieving, point locus, deformation}
\maketitle

\section{Introduction}
\label{Introduction}

Let $X$ be a finite set with an action of a finite cyclic group $C = \langle c \rangle$
and let $\omega = \exp(2 \pi i / |C|)$.
Let $X(q) \in \ZZ_{\geq 0}[q]$ be a polynomial with nonnegative integer coefficients. The triple
$(X, C, X(q))$ {\em exhibits the cyclic sieving phenomenon} \cite{RSW} if for all $r \geq 0$ we have
\begin{equation}
|X^{c^r}| = | \{ x \in X \,:\, c^r \cdot x = x \} | = X(\omega^r) = [ X(q) ]_{q = \omega^r}.
\end{equation}
More generally, if $X$ carries an action of a product $C_1 \times C_2 = \langle c_1 \rangle \times \langle c_2 \rangle$
of two finite cyclic groups and $X(q,t) \in \ZZ_{\geq 0}[q,t]$, the triple $(X, C_1 \times C_2, X(q,t))$ 
exhibits the {\em bicyclic sieving phenomenon} \cite{BRS} if for all $r, s \geq 0$ we have
\begin{equation}
|X^{(c_1^r, c_2^s)}| = | \{ x \in X \,:\, (c_1^r, c_2^s) \cdot x = x | = X(\omega_1^r, \omega_2^s)
\end{equation}
where $\omega_1 = \exp(2 \pi i / |C_1|)$ and $\omega_2 = \exp(2 \pi i / |C_2|)$.
In typical sieving results, $X$ is a set of combinatorial objects, the operators $c, c'$ act on $X$ by 
natural combinatorial actions, and $X(q)$ or $X(q,t)$ are generating functions for natural (bi)statistics 
on the set $X$.

Although ostensibly in the domain of enumerative combinatorics, the most desired proofs  of
CSPs are algebraic.
One seeks a representation-theoretic model for the action of $C$ on $X$ by finding a
$\CC$-vector space $V$ carrying an action of a group $G$ and possessing a distinguished basis
$\{ e_x \,:\, x \in X \}$ indexed by elements of the set $X$. The action of the generator $c \in C$
on $X$ is modeled by a group element $g \in G$ which satisfies 
$g \cdot e_x = e_{c \cdot x}$ for all $x \in X$. 
If $\chi: G \rightarrow \CC$ is the character of the $G$-module $V$, then
\begin{equation}
|X^{c^r}| = \mathrm{trace}_V(g^r) = \chi(g^r)
\end{equation}
 for all $r \geq 0$, transferring the enumerative problem of 
counting $|X^{c^r}|$ to the algebraic problem of calculating $\chi(g^r)$.  
These algebraic proofs are desired over brute force enumerative proofs because they 
give representation-theoretic insight about why a sieving result should hold.

In this article we use the orbit harmonics method of zero-dimensional
algebraic geometry to prove CSPs.
The results proven in this fashion will have the `nice' representation-theoretic proofs as 
outlined above.
This approach unifies various CSPs coming from
actions on word-like objects and  `quotients' thereof.
The  idea is to model the set $X$ geometrically as a finite point locus in $\CC^n$.
The relevant algebra has roots in (at least) the work of Kostant \cite{Kostant} and goes as follows.

The polynomial ring $\CC[\xx_n] := \CC[x_1, \dots, x_n]$ may be naturally viewed as the coordinate
ring of polynomial functions $f: \CC^n \rightarrow \CC$.
This identification gives rise to an action of the general linear group $\GL_n(\CC)$ on $\CC[\xx_n]$ by 
linear substitutions:
\begin{equation*}
g \cdot f(v) := f(g^{-1} \cdot v) \text{ for all $g \in \GL_n(\CC), f \in \CC[\xx_n],$ and $v \in \CC^n$.}
\end{equation*}
By restriction, any subgroup of $\GL_n(\CC)$ also acts on $\CC[\xx_n]$.

Let $X \subseteq \CC^n$ be a finite set of points which is closed under the action of $W \times C$ where
\begin{itemize}
\item $W \subseteq \GL_n(\CC)$ is a (finite) complex reflection group and
\item $C$ is a finite cyclic group acting on $\CC^n$ by root-of-unity scaling.
\end{itemize}
Let 
\begin{equation}
\II(X) := \{ f \in \CC[\xx_n] \,:\, f(v) = 0 \, \text{ for all $v \in X$} \}
\end{equation}
 be the ideal of polynomials in $\CC[\xx_n]$ which vanish on $X$.
Since  $X$ is finite, Lagrange Interpolation affords a $\CC$-algebra isomorphism
\begin{equation}
\label{first-orbit-iso}
\CC[X] \cong \CC[\xx_n]/\II(X)
\end{equation}
where $\CC[X]$ is the algebra of all functions $X \rightarrow \CC$.
Since $X$ is $W \times C$-stable, \eqref{first-orbit-iso} is also 
an isomorphism of ungraded $W \times C$-modules.
 
 For any nonzero polynomial $f \in \CC[\xx_n]$, let $\tau(f)$ be the highest degree component of $f$.
 That is, if $f = f_d + \cdots + f_1 + f_0$ with $f_i$ homogeneous of degree $i$ and $f_d \neq 0$, we set
 $\tau(f) := f_d$.  Given our locus $X$ with ideal $\II(X)$ as above, we define a homogeneous 
 ideal $\TT(X)$ by
 \begin{equation}
 \TT(X) := \langle \tau(f) \,:\, f \in \II(X), \, \, f \neq 0 \rangle \subseteq \CC[\xx_n].
 \end{equation}
 The ideal $\TT(X)$ is the {\em associated graded} ideal of $\II(X)$ and is often denoted
 $\mathrm{gr \,} \II(X)$.  By construction $\TT(X)$ is homogeneous and stable under $W \times C$.
 The isomorphism \eqref{first-orbit-iso} extends to an isomorphism of $W \times C$-modules
 \begin{equation}
\label{second-orbit-iso}
\CC[X] \cong \CC[\xx_n]/\II(X) \cong \CC[\xx_n]/\TT(X)
\end{equation}
where $\CC[\xx_n]/\TT(X)$ has the additional structure of a graded $W \times C$-module on which
$C$ acts by scaling in each fixed degree.

The action of $W \times C$ on $X$ coincides
\footnote{up to duality, but permutation representations are self-dual} 
with the $W \times C$-action on the natural basis $\{ e_x \,:\, x \in X \}$ of $\CC[X]$ 
of indicator functions $e_x: X \rightarrow \CC$ given by
\begin{equation}
e_x(y) = \begin{cases}
1 & x = y, \\
0 & \text{otherwise.}
\end{cases}
\end{equation}
If the graded 
$W$-isomorphism type of $\CC[\xx_n]/\TT(X)$ is known, 
the isomorphism \eqref{second-orbit-iso}
and Springer's theorem on regular elements \cite{Springer} (see Theorem~\ref{springer-theorem}) give
bicyclic sieving results for the set $X$
under product groups of the form $C' \times C$ where $C' \subseteq W$ is the subgroup generated by a 
{\em regular element} in $W$.
Furthermore, if $G \subseteq W$ is any subgroup, the set $X/G$ of $G$-orbits in $X$ carries a residual
action of $C$, and the Hilbert series of the $G$-invariant subspace
$(\CC[\xx_n]/\TT(X))^G$ is a cyclic sieving polynomial for the action of $C$ on $X/G$.
By varying
\begin{itemize}
\item the choice of point locus $X$ and
\item the choice of subgroup $G$ of $W$
\end{itemize}
a variety of CSPs can be obtained.
This method has been used \cite{ARR, D} to prove CSPs before; the purpose of this paper 
is to make its approach and utility explicit.

The procedure
$X \leadsto \CC[\xx_n]/\TT(X)$
which promotes the (ungraded) locus $X$ to the graded module $\CC[\xx_n]/\TT(X)$
is known as {\em orbit harmonics}.
Generators for the ideal $\TT(X)$ may be found by computer from the point set $X$ as follows.
The ideal $\II(X)$ may be expressed as either an intersection
or a product
\begin{equation}
\II(X) = \bigcap_{(\alpha_1, \dots, \alpha_n) \in X} \langle x_1 - \alpha_1, \dots, x_n - \alpha_n \rangle
= \prod_{(\alpha_1, \dots, \alpha_n) \in X} \langle x_1 - \alpha_1, \dots, x_n - \alpha_n \rangle
\end{equation}
of ideals corresponding to the points $(\alpha_1, \dots, \alpha_n)$ belonging
to $X \subseteq \CC^n$ . If $\{ g_1, g_2, \dots, g_r \}$ is a Gr\"obner basis
for $\II(X)$ with respect to any graded monomial order $\prec$ (i.e. we have $m \prec m'$ whenever $m, m'$ 
are monomials in $\CC[\xx_n]$ with $\deg m < \deg m'$), then 
$\TT(X)$ will be generated by $\{ \tau(g_1), \tau(g_2), \dots, \tau(g_r) \}$.
While this facilitates the investigation by computer of a graded quotient 
$\CC[\xx_n]/\TT(X)$ corresponding to a point locus $X$, a generating set of 
$\TT(X)$ obtained in this way can be messy and not so enlightening. Finding a nice generating set of 
$\TT(X)$ is often necessary to understand the graded $W$-isomorphism type of 
$\CC[X]/\TT(X)$.

In this paper we will focus mostly on the reflection group $W = \symm_n$ acting by 
coordinate permutation on $\CC^n$.
This setting has received substantial attention in algebraic combinatorics.
By making an appropriate choice of an $\symm_n$-stable locus $X$,
orbit harmonics has produced graded $\symm_n$-modules 
which give algebraic models for various intricate objects in
symmetric function theory \cite{GP, Griffin, HRS}.
It is our hope that this article inspires future connections between orbit harmonics 
and cyclic sieving.

We use  orbit harmonics  to reprove and unify a variety of known cyclic sieving results
\cite{ARR, BRS, BER, RSW, RhoadesFixed}, prove some cyclic sieving results which seem to have 
escaped notice, and give new proofs of some results \cite{Springer, MoritaNakajima} which are not 
{\em per se} in the field of cyclic sieving.
It would be interesting to see if our methods apply to the notion of {\em dihedral sieving}
due to Swanson \cite{Swanson} (see also \cite{RS}).

The remainder of this paper is organized as follows.
In {\bf Section~\ref{Background}} we give background on complex reflection groups and
the representation theory of $\symm_n$.
In {\bf Section~\ref{SMN}} we describe how orbit harmonics gives a new perspective 
on classical results of Springer and Morita-Nakajima.
We also state our main tool for proving sieving results from point loci
(Theorem~\ref{sieving-generator}).
In {\bf Section~\ref{Functional}} we apply Theorem~\ref{sieving-generator} to 
point loci corresponding to arbitrary, injective, and surjective functions between finite sets.
In {\bf Section~\ref{Other}} we apply Theorem~\ref{sieving-generator} to other combinatorial loci and
conclude in {\bf Section~\ref{Conclusion}} with possible future directions.

\section{Background}
\label{Background}

\subsection{Combinatorics}
A {\em weak composition of $n$} is a finite sequence $\mu = (\mu_1, \mu_2, \dots )$
of nonnegative integers which sum to $n$. We write $\mu \models_0 n$ to indicate that 
$\mu$ is a weak composition of $n$ and denote by $m_i(\mu)$ the multiplicity of $i$
as a part of $\mu$.
A {\em composition} of $n$ is a weak composition which contains only positive parts; we write
$\mu \models n$ to mean that $\mu$ is a composition of $n$.

A {\em partition} of $n$ is a composition $\lambda$ of $n$ consisting of weakly decreasing positive parts.
We write $\lambda \vdash n$ to indicate that $\lambda$ is a partition of $n$ and $|\lambda| = n$
for the sum of the parts of $\lambda$.
We also write $\ell(\lambda)$ for the number of parts of $\lambda$ and define the number
$b(\lambda) := \sum_{i \geq 1} (i-1) \cdot \lambda_i$.

Given a partition $\lambda = (\lambda_1, \lambda_2, \dots )$, the {\em Young diagram}
of $\lambda$ consists of $\lambda_i$ left-justified cells in row $i$. 
The Ferrers diagram of $(3,2,2) \vdash 7$ is shown on the left below.
A {\em tableau} $T$ of shape $\lambda$ is a filling $T: \lambda \rightarrow \ZZ_{> 0}$ of these boxes
with positive integers.
The {\em content} of a tableau $T$ is the weak composition $(\mu_1, \mu_2, \dots )$ where $\mu_i$ is the 
multiplicity of $i$ in $T$ and the {\em shape} $\shape(T)$ of $T$ is the partition $\lambda$.
A tableau $T$ is {\em semistandard} if its entries increase weakly across rows and strictly down
columns.  A semistandard tableau of shape $(3,2,2)$ and content $(2,0,2,2,1)$ is shown in the middle below.
The {\em Kostka number} $K_{\lambda,\mu}$ is the number of semistandard tableaux of shape $\lambda$
with content $\mu$.
A {\em standard} tableau is a semistandard tableau with content $(1, 1, \dots)$.
A standard tableau of shape $(3,2,2)$ is shown on the right below.

\begin{center}
\begin{young}
 & & \cr
 & \cr
 & \cr
\end{young}  \quad \quad \quad
\begin{young}
1 & 1 & 3 \cr
3 & 4 \cr
4 & 5
\end{young} \quad \quad \quad
\begin{young}
1 & 2 & 5 \cr
3 & 6 \cr
4 & 7
\end{young}
\end{center}

A {\em descent} in a word $w = w_1 \dots w_n$ over the positive integers is an index $i$ with 
$w_i > w_{i+1}$.  The {\em descent number} $\des(w)$ is the number of descents in $w$ and the
 {\em major index} $\maj(w)$ is the sum of the descents in $w$.
 Analogously,
 if $T$ is a standard tableau with $n$ boxes, an index $1 \leq i \leq n-1$ is a {\em descent} of $T$ if 
$i$ appears in a strictly higher row than $i+1$ in $T$. 
The {\em descent number} $\des(T)$ is
the number of descents in $T$
and the {\em major index} $\maj(T)$ be the sum of the descents in $T$.
The standard tableau $T$ shown above has descents $2, 3, 5,$ and $6$ so that $\des(T) = 4$ and
$\maj(T) = 2 + 3 + 5 + 6 = 16$.

The {\em fake degree polynomial}
$f^{\lambda}(q)$
corresponding to a partition $\lambda \vdash n$ is the generating function for major index over the 
set $\SYT(\lambda)$ of standard tableaux of shape $\lambda$:
\begin{equation}
f^{\lambda}(q) := \sum_{T \in \SYT(\lambda)} q^{\maj(T)}.
\end{equation}
The polynomial $f^{\lambda}(q)$ may be efficiently computed using the $q$-hook formula
\begin{equation}
f^{\lambda}(q) = q^{b(\lambda)} \frac{[n]!_q}{\prod_{c \in \lambda} [h_c]_q}
\end{equation}
where the product is over the cells $c$ in the Young diagram of $\lambda$ and $h_c$ is the hook
length at the cell $c$.  Here and throughout we use the standard $q$-analogs of numbers,
factorials, binomial, and multinomial coefficients:
\begin{equation}
\begin{cases}
[n]_q := 1 + q + \cdots + q^{n-1} \\ 
[n]!_q := [n]_q [n-1]_q \cdots [1]_q 
\end{cases} \quad
\begin{cases}
{n \brack k}_q := \frac{[n]!_q}{[k]!_q \cdot [n-k]!_q} \\
{n \brack \mu_1, \dots, \mu_r}_q  := \frac{[n]!_q}{[\mu_1]!_q \cdots [\mu_r]!_q}
\end{cases}
\end{equation}
where $\mu = (\mu_1, \dots, \mu_r)$ is a weak composition of $n$.

\subsection{Symmetric Functions}
We denote by $\Lambda = \bigoplus_{d \geq 0} \Lambda_d$ the graded ring of symmetric functions 
in an infinite variable set $\xx = (x_1, x_2, \dots )$ over the ground field $\CC(q)$.
Here $\Lambda_d$ denotes the 
subspace of $\Lambda$ consisting of homogeneous symmetric functions of degree $d$.
Two important elements of $\Lambda_d$ are the {\em homogeneous} and {\em elementary}
symmetric functions
\begin{equation}
h_d(\xx) := \sum_{i_1 \leq \cdots \leq i_d} x_{i_1} \cdots x_{i_d} \quad \quad \text{and} \quad \quad
e_d(\xx) := \sum_{i_1 < \cdots < i_d} x_{i_1} \cdots x_{i_d}.
\end{equation}
By restricting $h_d(\xx)$ and $e_d(\xx)$ to a finite variable set $\xx_n = \{x_1, \dots, x_n \}$,
we obtain the homogeneous and elementary
symmetric polynomials
$h_d(\xx_n)$ and $e_d(\xx_n)$.

Bases of $\Lambda_n$ are indexed by partitions of $n$. For a partition $\lambda \vdash n$, we let
\begin{equation}
h_{\lambda}(\xx), \quad e_{\lambda}(\xx), \quad s_{\lambda}(\xx), \quad \text{and} \quad
 \widetilde{H}_{\lambda}(\xx;q)
\end{equation}
denote the associated {\em homogeneous symmetric function}, {\em elementary symmetric function, Schur
function,} and {\em Hall-Littlewood symmetric function}.  For any partition
$\lambda = (\lambda_1 \geq \lambda_2 \geq \cdots )$ the $h$- and $e$-functions are defined by 
\begin{equation}
h_{\lambda}(\xx) := h_{\lambda_1}(\xx) h_{\lambda_2}(\xx) \cdots \quad \quad \text{and} \quad \quad
e_{\lambda}(\xx) := e_{\lambda_1}(\xx) e_{\lambda_2}(\xx) \cdots
\end{equation}
and the Schur function is given by
\begin{equation}
s_{\lambda}(\xx) := \sum_T \xx^T
\end{equation}
where the sum is over all semistandard tableaux $T$ of shape $\lambda$ and $\xx^T$ is shorthand for
the monomial
$x_1^{\alpha_1} x_2^{\alpha_2} \cdots $ where $\alpha_i$ is the number of $i$'s in the tableau $T$.
The $\widetilde{H}_{\lambda}(\xx;q)$ may be defined as 
\begin{equation}
\widetilde{H}_{\lambda}(\xx;q) =  
\sum_{\nu} \widetilde{K}_{\nu,\lambda}(q) \cdot s_{\nu}(\xx)
\end{equation}
where $\widetilde{K}_{\nu,\lambda}(q)$ is the {\em (modified) Kostka-Foulkes polynomial}
which is the generating function of the {\em cocharge} statistic on the semistandard tableaux of shape $\nu$
and content $\lambda$. We will not use the (somewhat involved) cocharge statistic explicitly in this paper; see
e.g. \cite{RhoadesFixed} for its definition.

\subsection{Representation theory of $\symm_n$}
A {\em class function} on a finite group $G$ is a map $\varphi: G \rightarrow \CC$ which is constant
on conjugacy classes. 
The set $R(G)$ of all class functions $G \rightarrow \CC$ forms a $\CC$-algebra under pointwise addition
and multiplication. We let $\langle -, - \rangle_G$ be the standard inner product on these class functions:
\begin{equation}
\langle \varphi, \psi \rangle_G := \frac{1}{|G|} \sum_{g \in G} \varphi(g) \cdot \overline{\psi(g)}.
\end{equation}
If $V, W$ are finite-dimensional $G$-modules with characters $\chi_V, \chi_W: G \rightarrow \CC$,
we extend this notation by setting $\langle V, W \rangle_G := \langle \chi_V, \chi_W \rangle_G$.

Irreducible representations of the symmetric group $\symm_n$ are in one-to-one correspondence
with partitions $\lambda \vdash n$. If $\lambda$ is a partition, we let $S^{\lambda}$ be the corresponding
irreducible module. If $V$ is any finite-dimensional $\symm_n$-module, there are unique 
multiplicities $c_{\lambda}$ so that $V \cong \bigoplus_{\lambda \vdash n} c_{\lambda} S^{\lambda}$.
The {\em Frobenius image} of $V$ is the symmetric function
\begin{equation}
\Frob(V) := \sum_{\lambda \vdash n} c_{\lambda} \cdot s_{\lambda}(\xx). 
\end{equation}
For example, if $\mu \vdash n$ and 
$\symm_{\mu} := \symm_{\mu_1} \times \symm_{\mu_2} \times \cdots $ is the corresponding parabolic
subgroup of $\symm_n$, the {\em coset representation}
\begin{equation}
M^{\mu} := \CC[\symm_n/\symm_{\mu}] = {\bf 1} \uparrow_{\symm_{\mu}}^{\symm_n}
\end{equation}
has Frobenius
image $h_{\mu}(\xx)$.

We will consider graded vector spaces and representations.
If $V = \bigoplus_{d \geq 0} V_d$ is any graded vector space, its {\em Hilbert series} is
 \begin{equation}
 \Hilb(V;q) = \sum_{d \geq 0} \dim (V_d) \cdot q^d.
 \end{equation}
 If
$V = \bigoplus_{d \geq 0} V_d$ is a graded $\symm_n$-module, its {\em graded Frobenius image} is
\begin{equation}
\grFrob(V; q) = \sum_{d \geq 0} \Frob(V_d) \cdot q^d.
\end{equation}

\subsection{Complex reflection groups} The general linear group $\GL_n(\CC)$ acts naturally on $V := \CC^n$.
An element $t \in \GL_n(\CC)$ is a {\em reflection} if its fixed space
$V^t := \{v \in V \,:\, t \cdot v = v \}$ has codimension one in $V$.
A finite subgroup $W \subseteq \GL_n(\CC)$ is a {\em reflection group} if it is generated by reflections.


Let $W \subseteq \GL_n(\CC)$ be a complex reflection group.  The full general linear group 
$\GL_n(\CC)$ acts on the polynomial ring $\CC[\xx_n] := \CC[x_1, \dots, x_n]$ by linear substitutions,
and by restriction $\CC[\xx_n]$ is a graded $W$-module.  Let $\CC[\xx_n]^W_+ \subseteq \CC[\xx_n]$ 
be the subspace of $W$-invariants with vanishing constant term and let 
$\langle \CC[\xx_n]^W_+ \rangle \subseteq \CC[\xx_n]$ be the ideal generated by this subspace.
The {\em coinvariant ring} attached to $W$ is the quotient 
\begin{equation}
R_W := \CC[\xx_n]/\langle \CC[\xx_n]^W_+ \rangle. 
\end{equation} 
This is a graded $W$-module.

For any irreducible $W$-module $U$, the {\em fake degree polynomial} $f^U(q)$ is the graded multiplicity
of $U$ in the coinvariant ring.  That is, we have
\begin{equation}
f^U(q) := \sum_{d \geq 0} m_{U,d} \cdot q^d
\end{equation}
where $m_{U,d}$ is the multiplicity of $U$ in the $W$-module given by the degree $d$ piece $(R_W)_d$ 
of $R_W$. When $W = \symm_n$ is the symmetric group of $n \times n$ permutation matrices, the irreducible
representations of $W$ correspond to partitions $\lambda \vdash n$, and $f^{S^{\lambda}}(q) = f^{\lambda}(q)$
specializes to our earlier definition.

Let $W \subseteq \GL_n(\CC)$ be a complex reflection group acting on $V = \CC^n$. An element
$c \in W$ is a {\em regular element} if it possesses an eigenvector $v \in V$ which has full $W$-orbit,
or equivalently the stabilizer
 $W_v := \{ w \in W \,:\, w \cdot v = v \}$ consists of the identity $e \in W$ alone.
 Such an eigenvector $v$ is called a {\em regular eigenvector} and if $\omega \in \CC^{\times}$
 is such that $w \cdot v = \omega v$, the order of $w \in W$ will equal the order of $\omega$
 in the multiplicative group $\CC^{\times}$.
 For example, when $W = \symm_n$ a permutation $w \in W$ is a regular element if and only if it is a power
 of an $n$-cycle or an $(n-1)$-cycle.

\subsection{Regular sequences}
A length $n$ sequence of polynomials $f_1, \dots, f_n \in \CC[\xx_n]$
is a {\em regular sequence} if for all $1 \leq i \leq n$, the map 
$(-) \times f_i: \CC[\xx_n]/\langle f_1, \dots f_{i-1} \rangle \rightarrow \CC[\xx_n]/\langle f_1, \dots, f_{i-1} \rangle$
of multiplication by $f_i$ is injective. If $f_1, \dots, f_n$ is a regular sequence, we have a short exact sequence
\begin{equation}
0 \rightarrow \CC[\xx_n]/\langle f_1, \dots, f_{i-1} \rangle \xrightarrow{(-) \times f_i}
\CC[\xx_n]/\langle f_1, \dots, f_{i-1} \rangle \twoheadrightarrow
\CC[\xx_n]/\langle f_1, \dots, f_{i-1}, f_i \rangle \rightarrow 0
\end{equation}
for all $i$.  If the $f_i$ are homogeneous, this implies that 
\begin{equation}
\Hilb(\CC[\xx_n]/\langle f_1, \dots, f_n \rangle; q) = [\deg f_1]_q \cdots [\deg f_n]_q
\end{equation}
and in particular $\dim(\CC[\xx_n]/\langle f_1, \dots, f_n \rangle) = \deg f_1 \cdots \deg f_n$.

A useful criterion for deciding whether a sequence of polynomials is regular is as follows.
Let $f_1, \dots, f_n \in \CC[\xx_n]$ be length $n$ sequence of homogeneous polynomials of 
positive degree.
This sequence is 
regular if and only if the locus in $\CC^n$ cut out by 
$f_1 = \cdots = f_n = 0$ consists of the origin $\{0\}$ alone.

\section{Theorems of Springer and Morita-Nakajima}
\label{SMN}

\subsection{Springer's theorem on regular elements}
\label{springer-subsection}
Before applying orbit harmonics to prove sieving results, 
we state representation-theoretic results
of Springer \cite{Springer} and Morita-Nakajima \cite{MoritaNakajima}.
which will be useful in our combinatorial work.
We explain how orbit harmonics may be used to prove these results.

Let $W \subseteq \GL_n(\CC)$ be a complex reflection group and let $c \in W$ be a regular element
with regular eigenvector $v \in \CC^n$ whose eigenvalue is $\omega \in \CC^{\times}$.
Let $C = \langle c \rangle$ be the cyclic subgroup of $W$ generated by $c$.  We regard the coinvariant ring
$R_W$ is a graded $W \times C$-module, where $W$ acts by linear substitutions and the generator
$c \in C$ sends each variable $x_i$ to $\omega x_i$.

\begin{theorem} (Springer \cite{Springer})
\label{springer-theorem}
Consider the action of $W \times C$ on $W$ given by $(u, c^r) \cdot w := uwc^{-r}$.
The corresponding
permutation representation $\CC[W]$ is isomorphic to $R_W$ as an ungraded $W \times C$-module.
\end{theorem}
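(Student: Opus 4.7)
The plan is to realize both $\CC[W]$ and $R_W$ as the orbit-harmonics quotient of a single $(W \times C)$-stable point locus $X \subseteq \CC^n$. I would pick a regular eigenvector $v \in \CC^n$ for $c$, so that $c \cdot v = \omega v$ and the $W$-stabilizer $W_v$ is trivial, and take $X := W \cdot v$. Then $|X| = |W|$ and the map $w \mapsto wv$ is a bijection $W \to X$. The locus $X$ is manifestly $W$-stable, and the scaling action of $C$ on $\CC^n$ preserves $X$ as well because $\omega^{\pm r}(wv) = w(c^{\pm r} v) \in W \cdot v$. Under $w \leftrightarrow wv$, left multiplication by $u \in W$ on $W$ corresponds to the $W$-action on $X$, while the scaling action of the generator of $C$ corresponds to right multiplication by a power of $c^{-1}$ on $W$; modulo a sign convention this matches the action $(u, c^r) \cdot w = uwc^{-r}$, yielding $\CC[W] \cong \CC[X]$ as $W \times C$-permutation modules.

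By (\ref{second-orbit-iso}), $\CC[X] \cong \CC[\xx_n]/\TT(X)$ as ungraded $W \times C$-modules, so it suffices to prove $\TT(X) = \langle \CC[\xx_n]^W_+ \rangle$. For the inclusion $\langle \CC[\xx_n]^W_+ \rangle \subseteq \TT(X)$, let $f$ be a homogeneous $W$-invariant of positive degree. Then $W$-invariance forces $f(wv) = f(v)$ for every $w \in W$, so $f - f(v) \in \II(X)$; its top-degree component is $\tau(f - f(v)) = f$, placing $f$ in $\TT(X)$. The reverse containment is then a dimension count: the containment above induces a surjection $R_W \twoheadrightarrow \CC[\xx_n]/\TT(X)$, while orbit harmonics gives $\dim \CC[\xx_n]/\TT(X) = |X| = |W|$ and Chevalley--Shephard--Todd gives $\dim R_W = |W|$, forcing equality of ideals. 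Chaining the isomorphisms produces $\CC[W] \cong \CC[X] \cong \CC[\xx_n]/\TT(X) = R_W$ as ungraded $W \times C$-modules.

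The main obstacle is conceptual rather than computational: one must carefully distinguish between the linear action of $c \in W$ on $\CC^n$ and the scaling action of the abstract cyclic group $C$. These two actions agree only on the line $\CC v$---this is exactly the coincidence that makes $X = W \cdot v$ simultaneously $W$-stable and $C$-stable---and translating the convention $c \cdot x_i = \omega x_i$ into the sign of the exponent in $uwc^{-r}$ requires a careful dualization. Once that sign bookkeeping is settled, the argument reduces to the standard reflection-group fact $\dim R_W = |W|$ together with the elementary but powerful observation that $\tau(f - f(v)) = f$ for every homogeneous $W$-invariant $f$ of positive degree.
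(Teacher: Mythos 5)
Your proposal is correct and follows essentially the same route as the paper's argument (Kostant's, in Subsection~\ref{springer-subsection}): apply orbit harmonics to the Springer locus $W \cdot v$, obtain $\langle \CC[\xx_n]^W_+ \rangle \subseteq \TT(W \cdot v)$ from $\tau(f - f(v)) = f$ for homogeneous invariants $f$ of positive degree, and force equality of ideals by the dimension count $\dim R_W = |W| = |W \cdot v|$. The one loose end you flag, the sign bookkeeping, is settled in the paper by letting the generator of $C$ act on $\CC^n$ as scaling by $\omega^{-1}$, so that the induced action on $\CC[\xx_n]$ sends $x_i \mapsto \omega x_i$ and the bijection $w \mapsto w \cdot v$ is equivariant for right multiplication by $c^{-1}$, exactly matching the action $(u, c^r) \cdot w = u w c^{-r}$ in the statement.
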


\begin{remark}
\label{springer-remark}
Theorem~\ref{springer-theorem}
gives a way to compute the irreducible characters of $W$ on 
the subgroup $C$ generated by $c$. As explained 
in \cite[Prop. 4.5]{Springer}, if $U$ is an irreducible $W$-module with character
$\chi: W \rightarrow \CC$, then 
\begin{equation}
\chi(c^r) = \mathrm{trace}_U(c^r) = f^{U^*}(\omega^r) = [f^{U^*}(q)]_{\omega^r}
\end{equation}
where $f^{U^*}(q) \in \CC[q]$ is the fake degree polynomial attached to the dual  
$U^* = \mathrm{Hom}_{\CC}(U, \CC)$ of the representation $U$.
\end{remark}

We describe an argument of Kostant \cite{Kostant} which 
proves Theorem~\ref{springer-theorem} using orbit harmonics.
This argument will be used to give an orbit harmonics proof 
of a result \cite[Thm. 1.4]{BRS} of Barcelo, Reiner, and Stanton (see Theorem~\ref{springer-bicsp}).

We let $C$ act on $\CC^n$ by the rule $c \circ v' := \omega^{-1} v'$ for all $v' \in V$.
The corresponding action of $C$ on $\CC[\xx_n]$ by linear substitutions sends $x_i$ to $\omega x_i$ 
for all $i$, just like the $C$-action on $R_W$ in Theorem~\ref{springer-theorem}.
Furthermore, this action of $C$ on $\CC^n$ commutes with the natural action $(w, v') \mapsto w \cdot v'$ 
of $W$, so we may
regard $\CC^n$ as a $W \times C$-module in this way.

Define the {\em Springer locus} to be the $W$-orbit of the regular eigenvector $v$ of $c$:
\begin{equation}
W \cdot v := \{ w \cdot v \,:\, w \in W \} \subseteq \CC^n
\end{equation}
The locus
$W \cdot v$ is certainly closed under the action of $W$; we claim that $X$ is also
closed under the $\circ$-action of $C$.
Indeed, for any $w \in W$ we have 
\begin{equation*}
c \circ (w \cdot v) = \omega^{-1} (w \cdot v) = w \cdot (\omega^{-1} v) = w \cdot (c^{-1} \cdot v)  = 
(wc^{-1}) \cdot v \in W \cdot v.
\end{equation*}
Since the $\circ$ action of $C$ and the $\cdot$ action of $W$ on $\CC^n$ commute,
we may regard $X$ as a $W \times C$-set. 
An inspection of the above chain of equalities and the regularity of $v$ shows that
the map $w \mapsto w \cdot v$ furnishes a 
$W \times C$-equivariant bijection 
\begin{equation}
\label{ungraded-springer-isomorphism}
W \xrightarrow{ \, \sim \, } W \cdot v
\end{equation}
where the action of $W \times C$ on $W$ is as in Theorem~\ref{springer-theorem}.

Chevalley \cite{Chevalley} proved that there exist 
algebraically independent $W$-invariant polynomials
$f_1, \dots, f_n$ of homogeneous positive degree such that 
$\CC[\xx_n]^W = \CC[f_1, \dots, f_n]$. Furthermore, we have isomorphisms of ungraded $W$-modules
\begin{equation}
R_W = \CC[\xx_n]/\langle f_1, \dots, f_n \rangle \cong \CC[W].
\end{equation}

We claim that the invariant polynomials $f_1, \dots, f_n$ generate the ideal $\TT(X)$. Indeed,
for any $1 \leq i \leq n$, let $f_i(v) \in \CC$ be the value of $f_i$ on the regular eigenvector $v \in \CC^n$.
The $W$-invariance of $f_i$ implies that $f_i - f_i(v) \in \II(W \cdot v)$, and taking the top degree component
gives $f_i \in \TT(W \cdot v)$, so that $\langle f_1, \dots, f_n \rangle \subseteq \TT(W \cdot v)$.
On the other hand, 
\begin{equation}
\dim ( \CC[\xx_n] / \langle f_1, \dots, f_n \rangle ) = \dim \CC[W] = |W| = |W \cdot v| =
\dim ( \CC[\xx_n] / \TT(W \cdot v) ),
\end{equation}
so that
\begin{equation}
\label{springer-ideal-equality}
\langle f_1, \dots, f_n \rangle = \TT(W \cdot v).
\end{equation}

We use the ideal equality \eqref{springer-ideal-equality} to prove Theorem~\ref{springer-theorem}.
Since the defining ideal $\langle \CC[\xx_n]^W_+ \rangle$ of the coinvariant ring $R_W$
is generated by $f_1, \dots, f_n$, orbit harmonics furnishes isomorphisms of ungraded $W \times C$-modules
\begin{equation}
R_W = 
\CC[\xx_n]/\langle f_1, \dots, f_n \rangle =
\CC[\xx_n]/\TT(W \cdot v) \cong
\CC[W \cdot v] \cong \CC[W]
\end{equation}
where the last isomorphism used
the $W \times C$-equivariant bijection \eqref{ungraded-springer-isomorphism}.
This completes the orbit harmonics proof of Theorem~\ref{springer-theorem}.

\subsection{A theorem of Morita-Nakajima via Orbit Harmonics}
\label{MN-subsection}
In this subsection we consider the case of the symmetric $W = \symm_n$.
Throughout this subsection, we fix a 
weak composition $\mu = (\mu_1, \dots, \mu_k)$ of $n$ with $k$
parts which has cyclic symmetry
of order $a$ for some $a \mid k$.  
That is, we have 
$\mu_i = \mu_{i+a}$ for all $i$ with subscripts interpreted modulo $k$.
Let $c$ be an arbitrary but fixed generator of the cyclic group $\ZZ_{k/a}$.
Morita and Nakajima proved \cite{MoritaNakajima} a variant
of Springer's Theorem~\ref{springer-theorem} as follows. 

Let $W_{\mu}$ be the family of length $n$ words $w_1 \dots w_n$ over the alphabet $[k]$
in which the letter $i$ appears $\mu_i$ times. 
The set $W_{\mu}$ carries an action of 
$\symm_n \times \ZZ_{k/a}$ where $\symm_n$ acts by 
subscript permutation and  $\ZZ_{k/a}$ acts by
$c: w_1 \dots w_n \mapsto (w_1 + a) \cdots (w_n + a)$ where letter values are interpreted modulo $k$.
Extending by linearity, the space $\CC[W_{\mu}]$ is a $\symm_n \times \ZZ_{k/a}$-module.

Let $I_{\mu} \subseteq \CC[\xx_n]$ be the {\em Tanisaki ideal} attached to the 
composition $\mu$ and let $R_{\mu} := \CC[\xx_n]/I_{\mu}$ be the corresponding 
{\em Tanisaki quotient ring}.  The ring $R_{\mu}$ is a graded $\symm_n$-module which may be described
in three equivalent ways.
\begin{enumerate}
\item  Let $\mathcal{F \ell}_n$ be the variety of complete flags 
$V_{\bullet} = (0 = V_0 \subset V_1 \subset \cdots \subset V_n = \CC^n)$ of subspaces of $\CC^n$.
If $X_{\mu} \in \GL_n(\CC)$ is a unipotent operator of Jordan type $\mu$,
The {\em Springer fiber} $\mathcal{B}_{\mu}$ is the closed subvariety of 
$\mathcal{F \ell}_n$ consisting of flags $V_{\bullet}$ which satisfy $X_{\mu} V_i = V_i$ for all $i$.
The cohomology of $\mathcal{B}_{\mu}$ may be presented \cite{Tanisaki} as 
\begin{equation*}
H^{\bullet}(\mathcal{B}_{\mu}; \CC) = R_{\mu}.
\end{equation*}
Although the variety $\mathcal{B}_{\mu}$ is not stable under the action of $\symm_n \subseteq \GL_n(\CC)$,
there is a {\em Springer representation} of $\symm_n$ on the cohomology ring 
$H^{\bullet}(\mathcal{B}_{\mu}; \CC)$.
\item Tanisaki \cite{Tanisaki} and Garsia-Procesi \cite{GP} gave explicit generators for the defining ideal 
$I_{\mu}$ of $R_{\mu}$.  These generators are certain elementary symmetric polynomials 
$e_d(S)$ in subsets $S$ of the variable set $\{x_1, \dots, x_n\}$ which depend on $\mu$.
This presentation makes it clear that $R_{\mu}$ is closed under the action of $\symm_n$;
we will make no explicit use of it here. Garsia and Procesi used this presentation to show that the 
graded Frobenius image of $R_{\mu}$ is a Hall-Littlewood function:
\begin{equation}
\label{hl-char}
\grFrob(R_{\mu}; q) = \widetilde{H}_{\mu}(\xx;q).
\end{equation}
Here we interpret $\widetilde{H}_{\mu}(\xx;q) := \widetilde{H}_{\mathrm{sort}(\mu)}(\xx;q)$ where
$\mathrm{sort}(\mu)$ is the partition obtained by sorting $\mu$ into weakly decreasing order.
\item  Let $\alpha_1, \dots, \alpha_k \in \CC$ be distinct complex numbers. We may consider the set 
$W_{\mu} \subseteq \CC^n$ as a point locus by identifying
\begin{equation*}
w_1 \dots w_n \leftrightarrow (\alpha_{w_1}, \dots, \alpha_{w_n}).
\end{equation*}
We have $I_{\mu} = \TT(W_{\mu})$ as ideals in $\CC[\xx_n]$. Since $W_{\mu}$ is closed under 
the coordinate permuting action of $\symm_n$, this makes it clear that 
$R_{\mu} = \CC[\xx_n]/I_{\mu} = \CC[\xx_n]/\TT(W_{\mu})$ is $\symm_n$-stable.
\end{enumerate}
The orbit harmonics interpretation (3) of $R_{\mu}$ was used by Garsia and Procesi \cite{GP} to derive 
Equation~\eqref{hl-char}.

Let $\omega := \exp(2 a \pi i / k)$ be a primitive  $(k/a)^{th}$ root-of-unity. We extend the graded
$\symm_n$-action on $R_{\mu}$ to a graded $\symm_n \times \ZZ_{k/a}$-action by letting 
the distinguished generator
$c \in \ZZ_{k/a}$ scale by $\omega^d$ in homogeneous degree $d$.

\begin{theorem}
(Morita-Nakajima \cite[Theorem 13]{MoritaNakajima})
\label{mn-theorem}
We have an isomorphism of ungraded $\symm_n \times \ZZ_{k/a}$-modules
\begin{equation*}
\CC[W_{\mu}] \cong R_{\mu}.
\end{equation*}
\end{theorem}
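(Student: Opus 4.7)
The plan is to follow the orbit harmonics template used for Theorem~\ref{springer-theorem} in Section~\ref{springer-subsection}, applied to the point locus interpretation~(3) of $R_\mu$. The key idea is to realize the letters $1,\ldots,k$ as roots of unity so that the cyclic shift by $a$ on $W_\mu$ becomes scalar multiplication on $\CC^n$; once this is arranged, orbit harmonics identifies $\CC[W_\mu]$ with $R_\mu$ automatically.

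First I would set $\zeta := \exp(2\pi i/k)$ and choose $\alpha_j := \zeta^{-j}$ for $j = 1, \ldots, k$. These are $k$ distinct complex numbers, so $w_1 \cdots w_n \leftrightarrow (\alpha_{w_1}, \ldots, \alpha_{w_n})$ embeds $W_\mu$ as a finite $\symm_n$-stable subset of $\CC^n$. Let $c \in C := \ZZ_{k/a}$ act on $\CC^n$ by $c \cdot v := \omega^{-1} v$, where $\omega = \zeta^a$ as in the statement of the theorem. The induced action on $\CC[\xx_n]$ scales every homogeneous polynomial of degree $d$ by $\omega^d$, which matches the prescribed $C$-action on $R_\mu$. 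A direct check gives $c \cdot (\alpha_{w_1}, \ldots, \alpha_{w_n}) = (\zeta^{-(w_1 + a)}, \ldots, \zeta^{-(w_n + a)}) = (\alpha_{w_1 + a}, \ldots, \alpha_{w_n + a})$, so scalar multiplication by $\omega^{-1}$ realizes the cyclic shift $w_i \mapsto w_i + a$; the shifted word lies in $W_\mu$ precisely because of the hypothesized cyclic symmetry $\mu_i = \mu_{i+a}$. Since scalar multiplication commutes with coordinate permutation, $W_\mu \subseteq \CC^n$ is $\symm_n \times C$-stable and the bijection with its combinatorial counterpart is $\symm_n \times C$-equivariant.

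Applying orbit harmonics as in \eqref{second-orbit-iso} then produces a chain $\CC[W_\mu] \cong \CC[\xx_n]/\II(W_\mu) \cong \CC[\xx_n]/\TT(W_\mu)$ of isomorphisms of ungraded $\symm_n \times C$-modules. By the point locus description~(3) of $R_\mu$ given in Section~\ref{MN-subsection}, the associated graded ideal $\TT(W_\mu)$ coincides with the Tanisaki ideal $I_\mu$, so the right-hand side is precisely $R_\mu$, completing the proof.

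The main technical input is the ideal identification $\TT(W_\mu) = I_\mu$, i.e.\ that the Tanisaki presentation arises as the associated graded of the vanishing ideal of $W_\mu$; this is the content of the Garsia--Procesi analysis cited in interpretation~(3) and holds for any choice of distinct scalars $\alpha_j$. Apart from this input and careful bookkeeping of the scaling conventions, the only creative step is the choice of roots of unity for the $\alpha_j$ that converts cyclic shift on letters into scalar multiplication on $\CC^n$; everything else is a formal consequence of orbit harmonics.
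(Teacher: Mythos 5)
Your proposal is correct and follows essentially the same route as the paper: realize $W_\mu$ as a point locus with roots of unity as coordinates so that the letter shift becomes root-of-unity scaling, then invoke the Garsia--Procesi identification $\TT(W_\mu) = I_\mu$ and orbit harmonics. The only difference is a harmless convention choice (you take $\alpha_j = \zeta^{-j}$ with scaling by $\omega^{-1}$, the paper takes $\alpha_j = \zeta^{j}$ with scaling by $\omega$), and your bookkeeping actually matches the prescribed $\omega^d$-scaling on $R_\mu$ more precisely, avoiding any appeal to self-duality.
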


When $\mu = (1^n)$, Theorem~\ref{mn-theorem} reduces to Theorem~\ref{springer-theorem}
when $W = \symm_n$ at the regular element $(1, 2, \dots, n) \in \symm_n$.
The proof of Theorem~\ref{mn-theorem} given in \cite{MoritaNakajima} involves
tricky symmetric function manipulations involving the Hall-Littlewood polynomials
$\widetilde{H}_{\mu}(\xx;q)$ when $q$ is specialized to a root of unity.
The work of
\cite{MoritaNakajima} 
relies on further intricate symmetric function identities due to Lascoux-Leclerc-Thibon \cite{LLT}.
Orbit harmonics gives an easier and more conceptual proof.

\begin{proof}
Let $\zeta := \exp(2 \pi i / k)$ be a primitive $k^{th}$ root-of-unity with $\zeta^a = \omega$.
In the interpretation (3) of $R_{\mu}$ described above, take the parameters $\alpha_1, \dots, \alpha_k$
defining the point locus $W_{\mu} \subseteq \CC^n$ to be $\alpha_j := \zeta^j$.  
If we let our distinguished generator $c$ of $\ZZ_{k/a}$ act on $\CC^n$ as scaling by $\omega$,
the set $W_{\mu}$ is closed under the action of the linear group $\symm_n \times \ZZ_{k/a}$.
As discussed above, Garsia and Procesi \cite{GP} used orbit harmonics to give an isomorphism
\begin{equation}
\CC[W_{\mu}] \cong \CC[\xx_n]/\TT(W_{\mu}) = R_{\mu}
\end{equation}
of ungraded $\symm_n \times \ZZ_{k/a}$-modules.
\end{proof}

\subsection{Loci and Sieving}
Our main `generating theorem' for sieving results is as follows.
For any group $W$, write $\mathrm{Irr}(W)$ for the family of (isomorphism classes of)
irreducible $W$-modules.

\begin{theorem}
\label{sieving-generator}
Let $W \subseteq \GL_n(\CC)$ be a complex reflection group, let $c' \in W$ be a regular element,
let $C' = \langle c' \rangle$ be the subgroup of $W$ generated by $c'$, and let 
$\omega := \exp(2 \pi i / k) \in \CC^{\times}$.
Let $C = \langle c \rangle \cong \ZZ_k$ be a cyclic group of order $k$ and consider the action
of $W \times C$ on $\CC^n$ where $c$ scales by $\omega$ and $W$ acts by left 
multiplication.

Let $X \subseteq \CC^n$ be a finite point set which is closed under the action of $W \times C$.

\begin{enumerate}
\item  Suppose that for $d \geq 0$, the isomorphism type of the degree $d$ piece of 
$\CC[\xx_n]/\TT(X)$ is given by
\begin{equation*}
( \CC[\xx_n]/\TT(X) )_d \cong \bigoplus_{U \in \mathrm{Irr}(W)} U^{\oplus m_{U,d}}.
\end{equation*}
The triple $(X, C \times C', X(q,t))$ exhibits the bicyclic sieving phenomenon where 
\begin{equation*}
X(q,t) = \sum_{U \in \mathrm{Irr}(W)} m_U(q) \cdot f^{U^*}(t).
\end{equation*}
where $m_U(q) := \sum_{d \geq 0} m_{U,d} \cdot q^d$.
\item Let $G \subseteq W$ be a subgroup. The set $X/G$ of $G$-orbits in $X$ carries a natural
$C$-action and the triple $(X/G, C, X(q))$ exhibits the cyclic sieving phenomenon where
\begin{equation*}
X(q) = \Hilb( (\CC[\xx_n]/\TT(X))^G; q).
\end{equation*}
\end{enumerate}
\end{theorem}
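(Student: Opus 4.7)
The plan is to exploit the orbit harmonics isomorphism $\CC[X] \cong \CC[\xx_n]/\TT(X)$ as ungraded $W \times C$-modules, recorded in \eqref{second-orbit-iso}, and then to compute permutation traces on the right-hand side where the graded structure makes the $C$-action transparent. Since the permutation module $\CC[X]$ is self-dual, the fixed-point count $|X^{(c^r,(c')^s)}|$ is precisely the trace of $(c^r,(c')^s)$ on $\CC[X]$, and hence equals its trace on $\CC[\xx_n]/\TT(X)$. Both parts will follow from this observation together with Remark~\ref{springer-remark}.

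For part (1), let $\omega' \in \CC^{\times}$ denote the regular eigenvalue of $c'$ (normalized in the usual way so that $\omega' = \exp(2\pi i/|C'|)$). Under the sign convention of \S\ref{springer-subsection}, $c$ acts on the degree-$d$ piece of $\CC[\xx_n]/\TT(X)$ as the scalar $\omega^d$, so I would expand
\[
\mathrm{trace}_{\CC[\xx_n]/\TT(X)}(c^r,(c')^s) = \sum_{d \geq 0} \omega^{rd} \cdot \mathrm{trace}_{(\CC[\xx_n]/\TT(X))_d}((c')^s).
\]
Inserting the hypothesized isotypic decomposition of each graded piece turns the inner trace into $\sum_U m_{U,d}\, \chi_U((c')^s)$, and the Kostant-Springer identity from Remark~\ref{springer-remark} then replaces $\chi_U((c')^s)$ by $f^{U^*}((\omega')^s)$. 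Interchanging the order of summation gives
\[
\sum_{U \in \mathrm{Irr}(W)} m_U(\omega^r)\cdot f^{U^*}((\omega')^s) = X(\omega^r,(\omega')^s),
\]
which is the claimed bicyclic sieving identity.

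For part (2), the $C$-action on $\CC^n$ commutes with that of $W$, hence with that of $G \subseteq W$, so $C$ descends to an action on the orbit set $X/G$. The $C$-equivariant orbit-sum basis identifies $\CC[X/G]$ with the $G$-invariant subspace $\CC[X]^G$, so taking $G$-invariants in the orbit harmonics isomorphism yields
\[
\CC[X/G] \cong \CC[X]^G \cong (\CC[\xx_n]/\TT(X))^G
\]
as ungraded $C$-modules. Since $c$ still acts on the degree-$d$ piece of the $G$-invariants as $\omega^d$, the trace of $c^r$ on $(\CC[\xx_n]/\TT(X))^G$ is $\sum_{d \geq 0} \omega^{rd} \dim\bigl((\CC[\xx_n]/\TT(X))^G_d\bigr) = \Hilb((\CC[\xx_n]/\TT(X))^G;\omega^r)$, which is exactly $X(\omega^r)$.

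The argument is essentially mechanical once one has the $W \times C$-equivariance of orbit harmonics together with Remark~\ref{springer-remark}. The only delicate point is a bookkeeping one: the geometric $C$-action on $\CC^n$ induces a dual action on polynomials, so I would take care to adopt the same sign convention as in \S\ref{springer-subsection} (equivalently, to interpret ``$c$ scales by $\omega$'' so that each degree-$d$ piece is scaled by $\omega^d$), ensuring that the assembled trace is an evaluation at $q=\omega^r$ rather than at $q = \omega^{-r}$. With this convention fixed, no further substantive input is required.
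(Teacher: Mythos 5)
Your proposal is correct and follows essentially the same route as the paper's proof: apply the orbit harmonics isomorphism $\CC[X] \cong \CC[\xx_n]/\TT(X)$ of ungraded $W \times C$-modules, compute the trace of $(c^r,(c')^s)$ on the graded side using the isotypic decomposition together with Springer's theorem via Remark~\ref{springer-remark}, and for part (2) take $G$-invariants and evaluate the Hilbert series at $\omega^r$. Your explicit attention to the eigenvalue of $c'$ and to the scaling/duality convention is a slightly more careful bookkeeping of points the paper treats implicitly, but it is not a different argument.
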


\begin{proof}
Applying orbit harmonics to the action of $W \times C$ on $X$ 
yields an isomorphism of ungraded $W \times C$-modules
\begin{equation}
\label{ungraded-isomorphism}
\CC[X] \cong \CC[\xx_n]/\TT(X).
\end{equation}
Let $\zeta := \exp(2 \pi i / n)$.
To prove (1), apply Theorem~\ref{springer-theorem} (and Remark~\ref{springer-remark}) to get that 
for any $r, s \geq 0$, the trace of 
 $(c^r, (c')^s) \in C \times C'$ acting on $\CC[\xx_n]/\TT(X)$ is 
$\sum_{U \in \mathrm{Irr}(W)} m_U(\omega^r) \cdot f^{U^*}(\zeta^s) = X(\omega^r, \zeta^s)$.
By the isomorphism \eqref{ungraded-isomorphism}, this coincides with the number of fixed
points of $(c^r, (c')^s)$ acting on $X$, completing the proof of (1).

For (2), we take $G$-invariants of both sides of the isomorphism \eqref{ungraded-isomorphism}
to get an isomorphism of $C$-modules
\begin{equation}
\label{g-invariants}
\CC[X/G] \cong (\CC[\xx_n]/\TT(X))^G.
\end{equation}
Since $C$ acts on the graded vector space $(\CC[\xx_n]/\TT(X))^G$ by root-of-unity scaling, 
we see that the trace of $c^r$ on \eqref{g-invariants} is 
$[\Hilb( (\CC[\xx_n]/\TT(X))^G; q)]_{q = \omega^r} = X(\omega^r)$ (for the 
right-hand side) or the number 
of orbits in $X/G$ fixed by $c^s$ (for the left-hand side), finishing the proof.
\end{proof}

\begin{remark}
We will mostly apply Theorem~\ref{sieving-generator} in the case $W = \symm_n$.
In this context $\mathrm{Irr}(W)$ coincides with the family of partitions $\lambda \vdash n$
and  the $f^{U^*}(t)$ appearing in Theorem~\ref{sieving-generator} (1) may be replaced by 
$f^{\lambda}(t)$.
\end{remark}

In order to use Theorem~\ref{sieving-generator} to prove a sieving result involving a set 
$X$, we must 
\begin{itemize}
\item realize the relevant action on $X$ in terms of an action on a point locus
in $\CC^n$, or a quotient thereof, and
\item calculate the graded isomorphism type of
$\CC[\xx_n]/\TT(X)$,
or the Hilbert series of $(\CC[\xx_n]/\TT(X))^G$.
\end{itemize}
As we shall see, this program varies in difficulty depending on the combinatorial action in question.

\section{The Functional Loci}
\label{Functional}

Rota's {\em Twelvefold Way}  is a foundational
result in enumeration and involves counting functions between finite  sets which 
are arbitrary, injective, or surjective.
Inspired by this, the loci considered in this section correspond 
to arbitrary, injective, and surjective functions between finite sets.

\begin{defn}
\label{functional-locus-definition}
Given integers $n$ and $k$, set $\omega := \exp(2 \pi i / k)$.
We define the following three point sets in $\CC^n$:
\begin{align}
X_{n,k} &:= \{ (a_1, \dots, a_n) \,:\, a_i \in \{ \omega, \omega^2, \dots, \omega^k \} \} \\
Y_{n,k} &:= \{ (a_1, \dots, a_n) \in X_{n,k} \,:\, \text{$a_1, \dots, a_n$ are distinct} \} \\
Z_{n,k} &:= \{ (a_1, \dots, a_n) \in X_{n,k} \,:\, \{a_1, \dots, a_n\} = \{\omega, \omega^2, \dots \omega^k \} \}
\end{align}
\end{defn}

Observe that $Y_{n,k} = \varnothing$ if $k < n$ and $Z_{n,k} = \varnothing$ if $n < k$. When $n = k$,
we have the identification of $Y_{n,k} = Z_{n,k}$ with permutations in $\symm_n$.  
Each of these sets is closed under the action of $\symm_n \times \ZZ_k$, where $\ZZ_k$ scales by 
$\omega$.

\subsection{The Quotient Rings}
In this subsection we give describe generating sets for the homogeneous ideals $\TT(X)$
corresponding to the quotients in Definition~\ref{functional-locus-definition} and describe the
graded $\symm_n$-isomorphism types of the quotients
$\CC[\xx_n]/\TT(X)$. This is easiest for the case of $X_{n,k}$ corresponding to arbitrary 
functions $[n] \rightarrow [k]$.

\begin{proposition}
\label{x-quotient-structure}
Let $n, k \geq 0$. The quotient ring $\CC[\xx_n]/\TT(X_{n,k})$ has presentation
\begin{equation}
\CC[\xx_n]/\TT(X_{n,k}) = \CC[\xx_n] / \langle x_1^k, \dots, x_n^k \rangle
\end{equation}
and we have
\begin{equation}
\grFrob(\CC[\xx_n]/\TT(X_{n,k}); q) = 
\sum_{\substack{\lambda_1 < k \\ \ell(\lambda) \leq n}} q^{|\lambda|} \cdot h_{m(\lambda)}
\end{equation}
where the sum is over all partitions $\lambda$ with largest part size $< k$ and length $\leq n$.
Here $m(\lambda)$ is the partition obtained by rearranging the nonzero elements of 
$(n-\ell(\lambda), m_1(\lambda), m_2(\lambda), \dots, m_{k-1}(\lambda))$ into weakly decreasing order.
\end{proposition}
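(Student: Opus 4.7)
The plan is to prove the two assertions in turn, first establishing the ideal presentation and then reading off the graded Frobenius image from an explicit monomial basis.

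For the ideal presentation, first observe that every $a = (a_1, \dots, a_n) \in X_{n,k}$ has $a_i^k = 1$, so the polynomials $x_i^k - 1$ lie in $\II(X_{n,k})$ for each $i$. Taking the top-degree component gives $x_i^k \in \TT(X_{n,k})$, hence the inclusion $\langle x_1^k, \dots, x_n^k \rangle \subseteq \TT(X_{n,k})$. For the reverse inclusion I would use a dimension count. By Lagrange interpolation $\dim_\CC \CC[\xx_n]/\TT(X_{n,k}) = |X_{n,k}| = k^n$. On the other hand, the common vanishing locus of $x_1^k, \dots, x_n^k$ is $\{0\}$, so $x_1^k, \dots, x_n^k$ is a regular sequence of degree-$k$ homogeneous polynomials; applying the Hilbert series formula for regular sequences recalled in the Background yields $\dim_\CC \CC[\xx_n]/\langle x_1^k,\dots,x_n^k\rangle = k^n$ as well. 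Equal dimensions force the containment to be an equality.

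For the graded $\symm_n$-structure, I would write down the standard monomial basis of the quotient, namely
\begin{equation*}
\{ x_1^{a_1} \cdots x_n^{a_n} \,:\, 0 \leq a_i \leq k-1 \},
\end{equation*}
and observe that it is permuted by $\symm_n$. The basis thus decomposes into a disjoint union of $\symm_n$-orbits indexed by the multiset of exponents, which is the same data as a partition $\lambda$ with $\lambda_1 < k$ and $\ell(\lambda) \leq n$ (list the nonzero $a_i$ in weakly decreasing order). A monomial in the orbit labeled by $\lambda$ has degree $|\lambda|$, and its stabilizer in $\symm_n$ is the Young subgroup permuting the $n - \ell(\lambda)$ zero exponents together with each block of exponents equal to a common positive value $i$, which has size $m_i(\lambda)$. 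Thus the stabilizer is the Young subgroup $\symm_{m(\lambda)}$ with $m(\lambda)$ as defined in the statement, and the corresponding orbit module is isomorphic to $M^{m(\lambda)} = \mathbf{1}\uparrow_{\symm_{m(\lambda)}}^{\symm_n}$, whose Frobenius image is $h_{m(\lambda)}$. Summing over $\lambda$ with a factor of $q^{|\lambda|}$ to track degree gives the claimed formula for $\grFrob$.

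Neither step is a serious obstacle: the ideal equality is immediate from the regular-sequence dimension comparison, and the Frobenius computation is an orbit-counting exercise once the monomial basis is in hand. The only mild subtlety worth stating carefully is the bookkeeping that converts the multiset of nonzero exponents into the partition $\lambda$ and then records the multiplicity of $0$ separately, which is exactly the content of the composition $m(\lambda)$.
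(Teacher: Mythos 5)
Your proof is correct and follows essentially the same route as the paper: establish $x_i^k \in \TT(X_{n,k})$ from the vanishing polynomial (your $x_i^k - 1$ is just the expanded form of the paper's product $(x_i-\omega)\cdots(x_i-\omega^k)$), conclude equality of ideals by the $k^n$ dimension count, and read off the graded Frobenius image from the $\symm_n$-orbit decomposition of the monomial basis into coset modules $M^{m(\lambda)}$ with Frobenius image $h_{m(\lambda)}$. Your explicit identification of the stabilizer and the regular-sequence justification of the dimension are just slightly more detailed versions of steps the paper leaves implicit.
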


\begin{proof}
Let $1 \leq i \leq n$. For any $(a_1, \dots, a_n) \in X_{n,k}$ we have
$a_i \in \{ \omega, \omega^2, \dots, \omega^k\}$ which means that
$(x_i - \omega)(x_i - \omega^2) \cdots (x_i - \omega^k) \in \II(X_{n,k})$. Taking the highest
degree component, we have $x_i^k \in \TT(X_{n,k})$.
We conclude that $\langle x_1^k, \dots, x_n^k \rangle \subseteq \TT(X_{n,k})$. 
On the other hand, we know that 
\begin{equation*}
\dim \CC[\xx_n]/\TT(X_{n,k}) = |X_{n,k}| = k^n
\end{equation*}
and since $\dim \CC[\xx_n] / \langle x_1^k, \dots, x_n^k \rangle = k^n$, this proves the first assertion.

For the second assertion, observe that
$\{ x_1^{b_1} \cdots x_n^{b_n} \,:\, 0 \leq b_i < k \}$ is a basis for the quotient
$\CC[\xx_n] / \langle x_1^k, \dots, x_n^k \rangle$. The action of $\symm_n$ on these 
monomials decomposes this set into orbits indexed by the partitions appearing in the sum. 
For each partition $\lambda$, the corresponding orbit lies in degree $|\lambda|$ and has 
Frobenius image $h_{m(\lambda)}$.
\end{proof}

We turn our attention to the locus $Y_{n,k}$ corresponding to injective functions
$[n] \rightarrow [k]$. 
The idea is to use regular sequences to reduce to the case where $k = n$. 

\begin{proposition}
\label{y-quotient-structure}
For any $n \leq k$ we have
\begin{equation}
\CC[\xx_n]/\TT(Y_{n,k}) = \CC[\xx_n] / \langle h_{k-n+1}(\xx_n), \dots, h_{k-1}(\xx_n), h_k(\xx_n) \rangle
\end{equation}
and
\begin{equation}
\grFrob(\CC[\xx_n]/\TT(Y_{n,k}); q) = {k \brack n}_q \cdot \sum_{T \in \SYT(n)} q^{\maj(T)} \cdot s_{\shape(T)}
\end{equation}
where the sum is over all standard tableaux $T$ with $n$ boxes.
\end{proposition}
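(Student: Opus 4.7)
The plan is to first identify $\TT(Y_{n,k})$ with the ideal $I := \langle h_{k-n+1}(\xx_n), \ldots, h_k(\xx_n)\rangle$, then compute the graded Frobenius via a Chevalley-type tensor decomposition. The containment $I \subseteq \TT(Y_{n,k})$ is the easy direction: for any $(a_1, \ldots, a_n) \in Y_{n,k}$ the entries are distinct $k$-th roots of unity, so the factorization $\prod_{\zeta^k = 1}(1 - \zeta z) = 1 - z^k$ shows $\prod_i(1 - a_i z)$ divides $1 - z^k$. Multiplying by $\sum_{d \geq 0} h_d(a) z^d = 1/\prod_i(1 - a_i z)$ yields $(1-z^k)\sum_d h_d(a) z^d = Q(z)$ with $\deg Q = k-n$, so reading off the coefficients of $z^d$ for $k-n+1 \leq d \leq k$ gives $h_d(a) = 0$ when $d < k$ and $h_k(a) = 1$. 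Taking top-degree components places each $h_d(\xx_n)$ with $k-n+1 \leq d \leq k$ in $\TT(Y_{n,k})$.

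The hard part will be upgrading this containment to equality, which reduces to proving that $h_{k-n+1}(\xx_n), \ldots, h_k(\xx_n)$ form a regular sequence in $\CC[\xx_n]$. By the criterion recorded in the Background section, it suffices to show that the only common zero in $\CC^n$ is the origin. Given $v \in \CC^n$ with $h_{k-n+1}(v) = \cdots = h_k(v) = 0$, let $r$ denote the number of nonzero coordinates of $v$, taken without loss of generality to be $v_1, \ldots, v_r$. The factorization $\prod_{i=1}^r (1 - v_i z) \cdot \sum_d h_d(v) z^d = 1$ yields the linear recurrence $\sum_{j=0}^r (-1)^j e_j(v) h_{d-j}(v) = 0$ for all $d \geq r$; since $e_r(v) = v_1 \cdots v_r \neq 0$, this can be inverted to solve for $h_{d-r}(v)$ in terms of $h_{d-r+1}(v), \ldots, h_d(v)$. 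As $r \leq n$, the hypothesis supplies $r$ consecutive zeros $h_{k-r+1}(v) = \cdots = h_k(v) = 0$, and running the backward recurrence for $d = k, k-1, \ldots, r$ cascades down to $h_0(v) = 0$, contradicting $h_0(v) = 1$ unless $r = 0$. Regularity then yields $\Hilb(\CC[\xx_n]/I; q) = [k-n+1]_q \cdots [k]_q$, so $\dim \CC[\xx_n]/I = k!/(k-n)! = |Y_{n,k}| = \dim \CC[\xx_n]/\TT(Y_{n,k})$, forcing $I = \TT(Y_{n,k})$.

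For the Frobenius calculation, I would exploit that the generators lie in the invariant subring $\CC[\xx_n]^{\symm_n} = \CC[h_1(\xx_n), \ldots, h_n(\xx_n)]$. Writing $J$ for the ideal they generate inside $\CC[\xx_n]^{\symm_n}$, Chevalley's theorem provides a graded $\symm_n$-equivariant $\CC[\xx_n]^{\symm_n}$-module isomorphism $\CC[\xx_n] \cong R_{\symm_n} \otimes_\CC \CC[\xx_n]^{\symm_n}$, with trivial $\symm_n$-action on the second factor. Tensoring over $\CC[\xx_n]^{\symm_n}$ with $\CC[\xx_n]^{\symm_n}/J$ produces
\begin{equation*}
\CC[\xx_n]/\TT(Y_{n,k}) \cong R_{\symm_n} \otimes_\CC \bigl(\CC[\xx_n]^{\symm_n}/J\bigr)
\end{equation*}
as graded $\symm_n$-modules, whence $\grFrob(\CC[\xx_n]/\TT(Y_{n,k}); q) = \grFrob(R_{\symm_n}; q) \cdot \Hilb(\CC[\xx_n]^{\symm_n}/J; q)$. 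The first factor is the classical $\sum_{T \in \SYT(n)} q^{\maj(T)} s_{\shape(T)}$, and dividing the known total Hilbert series $[k-n+1]_q \cdots [k]_q$ by $\Hilb(R_{\symm_n}; q) = [n]!_q$ identifies the second factor as ${k \brack n}_q$, completing the proof.
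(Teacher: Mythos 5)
Your proposal is correct, and it departs from the paper's proof in two of its four steps. The containment $\langle h_{k-n+1}(\xx_n),\dots,h_k(\xx_n)\rangle \subseteq \TT(Y_{n,k})$ is obtained by the same generating-function device as in the paper (the paper clears the denominator against $(1-\omega t)\cdots(1-\omega^k t)$ rather than against $1-z^k$, but this is cosmetic), and the upgrade to equality is the same dimension count $(k-n+1)\cdots k = |Y_{n,k}|$. Where you differ: first, you verify regularity by taking a putative common zero with $r$ nonzero coordinates and cascading the Newton-type recurrence $\sum_{j=0}^{r}(-1)^j e_j(v)h_{d-j}(v)=0$ downward from the $n\geq r$ consecutive vanishing values $h_{k-n+1}(v)=\cdots=h_k(v)=0$ to force $h_0(v)=0$, whereas the paper triangularizes the system using $h_d(x_1,\dots,x_i)=x_ih_{d-1}(x_1,\dots,x_i)+h_d(x_1,\dots,x_{i-1})$; both are elementary and complete. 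Second, for the graded Frobenius you base-change the Chevalley decomposition $\CC[\xx_n]\cong R_{\symm_n}\otimes_{\CC}\CC[\xx_n]^{\symm_n}$ along $\CC[\xx_n]^{\symm_n}\twoheadrightarrow \CC[\xx_n]^{\symm_n}/J$ (legitimate here since the generators are symmetric), so that $\grFrob$ factors as $\grFrob(R_{\symm_n};q)\cdot\Hilb(\CC[\xx_n]^{\symm_n}/J;q)$, with the second factor identified as ${k\brack n}_q$ by dividing Hilbert series; the paper instead exploits that the multiplication maps in the regular-sequence exact sequences are $\symm_n$-equivariant, getting $\grFrob(\CC[\xx_n]/\TT(Y_{n,k});q)=\grFrob(\CC[\xx_n];q)\,(1-q^{k-n+1})\cdots(1-q^k)$ and reducing to the coinvariant case $k=n$. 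Both routes end by invoking Lusztig--Stanley. Your tensor-product argument is slightly more structural: it explains the product form of the answer by exhibiting ${k\brack n}_q$ as the Hilbert series of the invariant quotient (which also hands you the $G=\symm_n$ sieving polynomial of Theorem 4.4 for free), at the cost of importing the equivariant freeness of $\CC[\xx_n]$ over its invariants; the paper's route stays entirely within the regular-sequence formalism already set up in its Background section and would apply verbatim even if the generators were not symmetric polynomials.
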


\begin{proof}
We first show that $h_d(\xx_n) \in \TT(Y_{n,k})$ for any $d > k-n$.  Indeed, introduce a new variable 
$t$ and consider the quotient
\begin{equation}
\frac{(1 - \omega t)(1 - \omega^2 t) \cdots (1 - \omega^k t)}{(1 - x_1 t) (1 - x_2 t) \cdots (1 - x_n t)}
= \sum_{d \geq 0} \sum_{a + b = d} (-1)^a \cdot e_a(\omega, \omega^2, \dots, \omega^k) \cdot
h_b(\xx_n) \cdot t^d.
\end{equation}
Whenever $(x_1, \dots, x_n) \in Y_{n,k}$, the $n$ factors in the denominator will cancel with $n$
factors in the numerator, yielding a polynomial in $t$ of degree $k-n$. If $d > k-n$, taking the 
coefficient of $t^d$ on both sides yields
$\sum_{a + b = d} (-1)^a \cdot e_a(\omega, \omega^2, \dots, \omega^k) \cdot
h_b(\xx_n) \in \II(Y_{n,k})$ so that $h_d(\xx_n) \in \TT(Y_{n,k})$.

It is known that the sequence $h_{k-n+1}(\xx_n), \dots, h_{k-1}(\xx_n), h_k(\xx_n)$ of polynomials
in $\CC[\xx_n]$ is regular. One way to see this is to check that the locus in $\CC^n$ cut out by these 
polynomials consists only of the origin $\{ 0 \}$.  Indeed, the identities 
\begin{equation}
h_d(x_1, \dots, x_i) = x_i \cdot h_{d-1}(x_1, \dots, x_i) + h_d(x_1, \dots, x_{i-1})
\end{equation}
mean that the system 
\begin{equation}
h_{k-n+1}(x_1, \dots, x_n) = 
h_{k-n+2}(x_1, \dots, x_n) = \cdots = h_{k-1}(x_1, \dots, x_n) = h_k(x_1, \dots, x_n) = 0
\end{equation}
is equivalent to the system
\begin{equation}
h_{k-n+1}(x_1, \dots, x_n) =
h_{k-n+2}(x_1, \dots, x_{n-1}) = \cdots = h_{k-1}(x_1, x_2) = h_k(x_1) = 0.
\end{equation}
The latter system is triangular and may be solved to yield the solution set $\{0\}$.

Since $h_{k-n+1}(\xx_n), \dots, h_{k-1}(\xx_n), h_k(\xx_n)$ is a regular sequence, we see that
\begin{multline}
\dim \CC[\xx_n]/\langle h_{k-n+1}(\xx_n), \dots, h_{k-1}(\xx_n), h_k(\xx_n) \rangle \\ = 
(k-n+1) \cdots (k-1) \cdot k = |Y_{n,k}| = \dim \CC[\xx_n]/\TT(Y_{n,k})
\end{multline}
which forces $\langle h_{k-n+1}(\xx_n), \dots, h_{k-1}(\xx_n), h_k(\xx_n) \rangle = \TT(Y_{n,k})$.
Furthermore, since the exact sequences 
\begin{multline}
0 \rightarrow \CC[\xx_n]/\langle h_{k-n+1}(\xx_n), \dots, h_{k-n+i-1}(\xx_n) \rangle
\xrightarrow{(-) \times h_{k-n+i}(\xx_n)} 
\CC[\xx_n]/\langle h_{k-n+1}(\xx_n), \dots, h_{k-n+i-1}(\xx_n) \rangle \\ \twoheadrightarrow 
\CC[\xx_n]/\langle h_{k-n+1}(\xx_n), \dots, h_{k-n+i-1}(\xx_n), h_{k-n+i}(\xx_n) \rangle \rightarrow 0
\end{multline}
involve $\symm_n$-equivariant maps, we see that 
\begin{equation}
\grFrob(\CC[\xx_n]/\TT(Y_{n,k}); q) = \grFrob(\CC[\xx_n];q) \times (1 - q^{k-n+1}) \cdots (1 - q^{k-1})(1-q^k)
\end{equation}
for all $k \geq n$
and in particular
\begin{multline}
\grFrob(\CC[\xx_n]/\TT(Y_{n,k});q) = {k \brack n}_q \cdot \grFrob(\CC[\xx_n]/\TT(Y_{n,n}); q)  \\ =
{k \brack n}_q \cdot \grFrob(\CC[\xx_n]/\langle h_1(\xx_n), \dots, h_n(\xx_n) \rangle; q) =
{k \brack n}_q \cdot \sum_{T \in \SYT(n)} q^{\maj(T)} \cdot s_{\shape(T)}
\end{multline}
where the final equality is due to
 Lusztig (unpublished)
and Stanley \cite{Stanley}.
\end{proof}

The surjective locus $Z_{n,k}$ was studied by Haglund, Rhoades, and Shimozono \cite{HRS}
and is the most difficult functional locus to analyze.
We quote their results here.

\begin{proposition}
\label{z-quotient-structure} (Haglund-Rhoades-Shimozono \cite{HRS})
For any $k \leq n$ we have
\begin{equation}
\CC[\xx_n]/\TT(Z_{n,k}) = \CC[\xx_n]/\langle x_1^k, x_2^k, \dots, x_n^k, 
e_n(\xx_n), e_{n-1}(\xx_n), \dots, e_{n-k+1}(\xx_n) \rangle
\end{equation}
and
\begin{equation}
\grFrob(\CC[x_1, \dots, x_n]/\TT(Z_{n,k}); q) = 
\sum_{T \in \SYT(n)} q^{\maj(T)} \cdot {n - \des(T) - 1 \brack n-k}_q \cdot
s_{\shape(T)}.
\end{equation}
\end{proposition}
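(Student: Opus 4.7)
The plan is to follow the template of Propositions~\ref{x-quotient-structure} and~\ref{y-quotient-structure}: first exhibit the proposed generators inside $\TT(Z_{n,k})$ via vanishing arguments, then match the dimension of the quotient against $|Z_{n,k}|$ to upgrade to ideal equality, and finally extract the graded $\symm_n$-isomorphism type. Since $Z_{n,k} \subseteq X_{n,k}$, the relations $x_i^k \in \TT(Z_{n,k})$ come for free from the argument in Proposition~\ref{x-quotient-structure}. For the elementary symmetric generators I would adapt the generating-function trick from Proposition~\ref{y-quotient-structure}: at any $(a_1,\dots,a_n) \in Z_{n,k}$ every $k$-th root of unity occurs among the coordinates, so $\prod_{i=1}^n (1 - a_i t)$ is divisible by $1 - t^k$. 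Reducing $\prod_{i=1}^n (1 - x_i t) = \sum_{d=0}^n (-1)^d e_d(\xx_n) t^d$ modulo $1 - t^k$ and grouping coefficients by residue of $d$ modulo $k$ produces, for each $j \in \{0,1,\dots,k-1\}$, a polynomial
\begin{equation*}
g_j(\xx_n) \, := \sum_{\substack{0 \le d \le n \\ d \equiv j \pmod k}} (-1)^d e_d(\xx_n) \,\in\, \II(Z_{n,k}).
\end{equation*}
As $j$ varies, the largest $d$ appearing in $g_j$ sweeps out exactly the set $\{n-k+1,\dots,n\}$, so $\tau(g_j)$ is (up to sign) the corresponding $e_d(\xx_n)$, establishing $\langle x_1^k, \dots, x_n^k, e_{n-k+1}(\xx_n), \dots, e_n(\xx_n) \rangle \subseteq \TT(Z_{n,k})$.

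To upgrade to equality I would exhibit an explicit monomial basis of $\CC[\xx_n]/\langle x_1^k,\dots,x_n^k, e_{n-k+1}(\xx_n),\dots, e_n(\xx_n)\rangle$ indexed by ordered set partitions of $[n]$ with $k$ blocks (equivalently, surjections $[n] \twoheadrightarrow [k]$), thereby matching the dimension $|Z_{n,k}|$. Concretely, a Gr\"obner basis in a suitable graded term order leaves standard monomials $\xx^{\aaa}$ with $0 \le a_i < k$ subject to a staircase condition on the first-occurrence pattern of each exponent, which encodes the block structure of an ordered set partition.

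The graded Frobenius formula is the hard step of the proposition. I would follow the route of \cite{HRS}: combine the Lusztig--Stanley formula $\sum_{T \in \SYT(n)} q^{\maj(T)} s_{\shape(T)}$ recalled in the proof of Proposition~\ref{y-quotient-structure} with a careful analysis of how the additional $x_i^k$ relations deform the graded character, so that the $q$-binomial factor ${n-\des(T)-1 \brack n-k}_q$ emerges from a descent-statistic argument on standard tableaux. The main obstacle is decoupling the Schur contribution $s_{\shape(T)}$ from the binomial factor depending on $\des(T)$; in \cite{HRS} this is handled by an RSK-type insertion on surjections (or ordered set partitions) that matches descents in the combinatorial model against monomial degrees in the ring, and I expect any self-contained proof to require a similarly delicate combinatorial bijection.
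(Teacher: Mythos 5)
You are right that the paper itself offers no proof here: Proposition~\ref{z-quotient-structure} is quoted from Haglund--Rhoades--Shimozono \cite{HRS}, and the surrounding text explicitly says that all known proofs require intricate arguments (Gr\"obner theory, a Lehmer-code variant attached to points of $Z_{n,k}$, and nontrivial symmetric function theory). The part of your plan that you actually carry out is correct and is essentially the easy half of \cite{HRS}: since $Z_{n,k}\subseteq X_{n,k}$ you get $x_i^k\in\TT(Z_{n,k})$ as in Proposition~\ref{x-quotient-structure}, and because every point of $Z_{n,k}$ has all $k$ roots of unity among its coordinates, $\prod_{i=1}^n(1-a_it)$ is divisible by $\prod_{j=1}^k(1-\omega^jt)=1-t^k$, so reducing $\sum_d(-1)^de_d(\xx_n)t^d$ modulo $1-t^k$ gives $g_j\in\II(Z_{n,k})$ with $\tau(g_j)=\pm e_d(\xx_n)$, and $d$ runs over $\{n-k+1,\dots,n\}$ as $j$ varies. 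This cleanly yields $\langle x_1^k,\dots,x_n^k,e_n(\xx_n),\dots,e_{n-k+1}(\xx_n)\rangle\subseteq\TT(Z_{n,k})$.

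The gap is everything after that, and it is the actual content of the proposition. The containment only gives $\dim \CC[\xx_n]/\langle x_1^k,\dots,x_n^k,e_n,\dots,e_{n-k+1}\rangle\ \geq\ |Z_{n,k}|$; what must be proved is the reverse (spanning) inequality, and unlike Proposition~\ref{y-quotient-structure} there is no regular-sequence shortcut: these $n+k$ generators in $n$ variables cannot form a regular sequence, so neither the dimension nor the Hilbert series can be read off from degrees. Your sentence asserting that ``a Gr\"obner basis in a suitable graded term order leaves standard monomials subject to a staircase condition on the first-occurrence pattern'' is precisely the crux of \cite{HRS}, stated without proof; establishing the correct standard monomial set (their skip-monomial analysis and generalized Artin basis) is a substantial argument, and the first-occurrence description you give is not obviously that basis. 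Similarly, the graded Frobenius formula with the factor ${n-\des(T)-1 \brack n-k}_q$ is simply deferred to \cite{HRS}. Since the paper treats this proposition as a citation, deferring is defensible in context, but as a standalone proof your proposal establishes only the inclusion of the candidate ideal into $\TT(Z_{n,k})$ and leaves the two genuinely hard statements --- the dimension/basis claim and the Frobenius expansion --- unproved.
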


All known proofs of Proposition~\ref{z-quotient-structure} involve intricate arguments 
such as Gr\"obner theory, a variant of Lehmer codes attached to points in $Z_{n,k}$,
and somewhat involved symmetric function theory.

\subsection{Bicyclic Sieving}
We describe the bicyclic sieving phenomena obtained by applying 
Theorem~\ref{sieving-generator} (1) to the loci $X_{n,k}, Y_{n,k},$ and $Z_{n,k}$.
Rather than actions on point sets, we phrase the result in terms of actions on words.

\begin{theorem}
\label{word-bicyclic-sieving}
Let $n$ and $k$ be positive integers. The following triples exhibit the bicyclic sieving phenomenon.

\begin{enumerate}
\item  The triple $(X_{n,k}, \ZZ_n \times \ZZ_k, X_{n,k}(q,t))$ where 
$X_{n,k}$ is the set of all length $n$ words $w_1 w_2 \dots w_n$ over the alphabet $[k]$,
the cyclic group $\ZZ_n$ acts by rotating positions
$w_1 w_2 \dots w_n \mapsto w_2 \dots w_n w_1$, the cyclic group $\ZZ_k$ 
acts by rotating values $w_1 w_2 \dots w_n \mapsto (w_1 + 1)(w_2 + 1) \cdots (w_n + 1)$
(interpreted modulo $k$), and
\begin{equation}
X_{n,k}(q,t) = \sum_{\substack{\mu \text{ a partition} \\ \ell(\mu) \leq n, \, \, \mu_1 \leq k}}
q^{|\mu|} \cdot {n \brack n - \ell(\mu), m_1(\mu), m_2(\mu), \dots }_t.
\end{equation}
\item The triple $(Y_{n,k}, \ZZ_n \times \ZZ_k, Y_{n,k}(q,t))$ where 
$Y_{n,k} \subseteq X_{n,k}$ is the subset of words with distinct letters, the group 
$\ZZ_n \times \ZZ_k$ acts by restricting its action on $X_{n,k}$, and
\begin{equation}
Y_{n,k}(q,t) = {k \brack n}_q \cdot \sum_{\lambda \vdash n} f^{\lambda}(q) \cdot f^{\lambda}(t).
\end{equation}
\item  The triple $(Z_{n,k}, \ZZ_n \times \ZZ_k, Z_{n,k}(q,t))$ where 
$Z_{n,k} \subseteq X_{n,k}$ is the subset of words in which every letter in $[k]$ appears,
the group $\ZZ_n \times \ZZ_k$ acts by restricting its action on $X_{n,k}$, and 
\begin{equation}
Z_{n,k}(q,t) = \sum_{T \in \SYT(n)} q^{\maj(T)} \cdot {n - \des(T) - 1 \brack n-k}_q \cdot
f^{\shape(T)}(t).
\end{equation}
\end{enumerate}
\end{theorem}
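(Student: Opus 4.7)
The plan is to apply Theorem~\ref{sieving-generator}(1) to the three point loci $X_{n,k}, Y_{n,k}, Z_{n,k} \subseteq \CC^n$ of Definition~\ref{functional-locus-definition}, using the graded Frobenius images computed in Propositions~\ref{x-quotient-structure}, \ref{y-quotient-structure}, and \ref{z-quotient-structure}. First I identify each locus with a set of words via the bijection $(a_1,\dots,a_n) \leftrightarrow w_1 \cdots w_n$, where $a_i = \omega^{w_i}$. Under this identification, $X_{n,k}, Y_{n,k}, Z_{n,k}$ become the sets of arbitrary, injective, and surjective words over $[k]$; the coordinate permutation action of $\symm_n$ becomes positional rearrangement of words; and scaling by $\omega$ becomes the value shift $w_i \mapsto w_i + 1 \pmod{k}$.

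For the regular element I take $c' = (1, 2, \dots, n) \in \symm_n$, an $n$-cycle whose coordinate action is precisely cyclic rotation $w_1 w_2 \cdots w_n \mapsto w_2 \cdots w_n w_1$ and whose order $n$ generates $C' = \ZZ_n$. Since the irreducible $\symm_n$-modules $S^\nu$ are self-dual, Theorem~\ref{sieving-generator}(1) produces the bicyclic sieving polynomial
\[
X(q,t) = \sum_{\nu \vdash n} m_\nu(q) \cdot f^\nu(t),
\]
obtained from the graded Frobenius image of $\CC[\xx_n]/\TT(X)$ by substituting each Schur function $s_\nu$ by $f^\nu(t)$. Parts (2) and (3) then fall out immediately: applying this substitution to Proposition~\ref{y-quotient-structure} and grouping the sum over $T \in \SYT(n)$ by shape gives
\[
\sum_{T \in \SYT(n)} q^{\maj(T)} f^{\shape(T)}(t) = \sum_{\nu \vdash n} f^\nu(q) f^\nu(t),
\]
yielding $Y_{n,k}(q,t)$; for part (3), the substitution applied to Proposition~\ref{z-quotient-structure} produces $Z_{n,k}(q,t)$ verbatim.

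For part (1), the graded Frobenius $\sum_\lambda q^{|\lambda|} h_{m(\lambda)}$ of Proposition~\ref{x-quotient-structure} requires one extra step: I expand $h_{m(\lambda)} = \sum_\nu K_{\nu, m(\lambda)} \, s_\nu$ and pass to fake degrees, reducing the claim to the classical identity
\[
\sum_{\nu \vdash n} K_{\nu, \alpha} \cdot f^\nu(t) = {n \brack \alpha_1, \alpha_2, \dots, \alpha_r}_t
\]
valid for any weak composition $\alpha$ of $n$. This identity is the Hilbert series statement that the $\symm_\alpha$-invariants of the coinvariant ring $R_{\symm_n}$ have graded dimension equal to the $t$-multinomial coefficient, which follows from Chevalley's theorem ($\CC[\xx_n] \cong R_{\symm_n} \otimes \CC[\xx_n]^{\symm_n}$) combined with the Lusztig--Stanley formula $\grFrob(R_{\symm_n}; t) = \sum_\nu f^\nu(t) s_\nu$, or equivalently from the $\mu = \alpha$ case of Theorem~\ref{mn-theorem}. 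The main obstacle has already been absorbed into the structure theorems of the quotient rings, most substantially Proposition~\ref{z-quotient-structure}; what remains is the bookkeeping translation from the Schur basis to fake degrees afforded by Springer's theorem.
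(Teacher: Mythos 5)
Your proposal is correct and, in its overall architecture, is the same as the paper's proof: apply Theorem~\ref{sieving-generator}(1) with $W=\symm_n$, $c'$ a long cycle, and the graded Frobenius images of Propositions~\ref{x-quotient-structure}, \ref{y-quotient-structure}, \ref{z-quotient-structure}, using self-duality of the $S^{\nu}$ to replace $f^{U^*}(t)$ by $f^{\nu}(t)$; parts (2) and (3) then read off directly, exactly as in the paper. The one genuine divergence is how you establish the key identity for part (1), namely $\sum_{\nu \vdash n} K_{\nu,\alpha}\, f^{\nu}(t) = {n \brack \alpha_1,\alpha_2,\dots}_t$ after expanding $h_{m(\lambda)}$ by Young's rule. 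The paper proves this bijectively: by the Schensted correspondence (which sends a word $w$ of content $\alpha$ to a pair $(P,Q)$ with $\maj(w)=\maj(Q)$), the left-hand side is the $\maj$ generating function over words of content $\alpha$, which is MacMahon's $t$-multinomial. You instead prove it algebraically, as the statement $\Hilb\bigl((R_{\symm_n})^{\symm_\alpha};t\bigr)={n \brack \alpha}_t$, combining Chevalley's freeness theorem with the Lusztig--Stanley formula $\grFrob(R_{\symm_n};t)=\sum_{\nu}f^{\nu}(t)\,s_{\nu}$; this is a perfectly valid alternative and arguably more in the spirit of the orbit-harmonics theme, at the cost of invoking coinvariant-ring structure where the paper only needs RSK. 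One small caution: your parenthetical that the identity follows ``equivalently from the $\mu=\alpha$ case of Theorem~\ref{mn-theorem}'' is not right as stated, since that theorem is an ungraded isomorphism and by itself only yields the $t=1$ specialization; since your Chevalley/Lusztig--Stanley argument is complete on its own, this aside should simply be dropped.
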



\begin{proof}
We prove these items in reverse order: (3), then (2), then (1).
Item (3) follows immediately by combining 
Theorem~\ref{sieving-generator} (1) and Proposition~\ref{z-quotient-structure}.

If we apply Theorem~\ref{sieving-generator} (1) to the locus $Y_{n,k}$ and use 
Proposition~\ref{y-quotient-structure}, we obtain a bicyclic sieving result with polynomial
\begin{equation}
{k \brack n} \cdot 
\sum_{T \in \SYT(n)} q^{\maj(T)} f^{\shape(T)}(t) = {k \brack n} \cdot 
\sum_{\lambda \vdash n} f^{\lambda}(t) \cdot f^{\lambda}(t) =
Y_{n,k}(q,t);
\end{equation}
this proves item (2) of this theorem.

Finally, if we apply Theorem~\ref{sieving-generator} (1) to the locus $X_{n,k}$ and
use Proposition~\ref{x-quotient-structure}, we obtain a bicyclic sieving result with polynomial
\begin{equation}
\label{z-first}
\sum_{\substack{\mu \text{ a partition} \\ \ell(\mu) \leq n, \, \, \mu_1 \leq k}}   \sum_{\lambda \vdash n}
q^{|\mu|}  \cdot  K_{\lambda, \mu} \cdot f^{\lambda}(t)
\end{equation}
 where we applied {\em Young's Rule}: for $\mu \vdash n$ we have
 $h_{\mu} = \sum_{\lambda \vdash n} K_{\lambda,\mu} s_{\lambda}$.
For fixed $\mu$,  we claim that
\begin{equation}
\label{z-second}
\sum_{\lambda \vdash n} K_{\lambda,\mu} \cdot f^{\lambda}(t) = 
{n \brack n - \ell(\mu), m_1(\mu), m_2(\mu), \dots }_t.
\end{equation}
Indeed, the left-hand side of Equation~\ref{z-second} is the generating function for the 
major index statistic over the set of words $w = w_1 \dots w_n$ with $m_i(\mu)$ copies of $i$.
The {\em Schensted correspondence} bijects such words $w$ with ordered pairs $(P, Q)$
of tableaux of the same shape with $n$ boxes such that 
\begin{itemize}
\item  $P$ is semistandard and $Q$ is standard, and
\item  if $w \mapsto (P, Q)$ then $\maj(w) = \maj(Q)$.
\end{itemize}
Since $f^{\lambda}(t)$ is the generating function for major index on standard tableaux of shape
$\lambda$, Equation~\eqref{z-second} follows.
Applying Equation~\eqref{z-second}, we see that the expression \eqref{z-first} equals the 
formula for $X_{n,k}(q,t)$ in the statement of the theorem.
\end{proof}

\subsection{The subgroup $G = \symm_n$}
For our first application of Theorem~\ref{sieving-generator} (2), we consider the case where 
$W = \symm_n$ and the subgroup
$G = \symm_n$ is the full symmetric group. 
We phrase our sieving results in terms of compositions. 

Let $\WComp_{n,k}$ be the family of weak compositions of $n$ of length $k$ and $\Comp_{n,k}$ 
be the set of compositions of $n$ of length $k$.
We have natural identifications of orbit sets 
\begin{equation}
X_{n,k}/\symm_n = \WComp_{n,k} \quad \quad
Y_{n,k}/\symm_n = {[k] \choose n} \quad \quad
Z_{n,k}/\symm_n = \Comp_{n,k}
\end{equation}
where ${[k] \choose n}$ is the family of $n$-element subsets of $[k]$.
The relevant sieving result reads as follows.

\begin{theorem}
\label{compositions-csp}
Let $n$ and $k$ be positive integers. The following triples exhibit the cyclic sieving phenomenon.
\begin{enumerate}
\item The triple $(\WComp_{n,k}, \ZZ_k, {n+k-1 \brack n}_q)$ where $\ZZ_k$ acts by rotation
$(\alpha_1, \alpha_2, \dots, \alpha_k) \mapsto (\alpha_2, \dots, \alpha_k, \alpha_1)$.
\item The triple $\left(  {[k] \choose n}, \ZZ_k, {k \brack n}_q \right)$ where $\ZZ_k$ acts on 
${ [k] \choose n}$ by increasing entries by $1$ modulo $k$.
\item The triple $(\Comp_{n,k}, \ZZ_k, {n-1 \brack k-1}_q)$ where  $\ZZ_k$ acts by rotation.
\end{enumerate}
\end{theorem}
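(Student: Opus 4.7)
The plan is to invoke Theorem~\ref{sieving-generator}(2) three times with $W = G = \symm_n$, taking the locus to be $X_{n,k}$, $Y_{n,k}$, or $Z_{n,k}$ respectively. The first step is to identify the orbit spaces: sending the point $(\omega^{w_1}, \dots, \omega^{w_n})$ to its multiset of coordinates yields natural bijections $X_{n,k}/\symm_n \cong \WComp_{n,k}$, $Y_{n,k}/\symm_n \cong \binom{[k]}{n}$, and $Z_{n,k}/\symm_n \cong \Comp_{n,k}$. Scalar multiplication by $\omega$ on $\CC^n$ sends $(\omega^{w_1}, \dots, \omega^{w_n})$ to $(\omega^{w_1+1}, \dots, \omega^{w_n+1})$, so on orbits it cyclically rotates compositions and cyclically shifts subsets by $1$ modulo $k$. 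These agree with the actions in the theorem up to the choice of generator of $\ZZ_k$, which has no effect on CSP.

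Granted these identifications, Theorem~\ref{sieving-generator}(2) asserts that each sieving polynomial equals $\Hilb((\CC[\xx_n]/\TT(X))^{\symm_n}; q)$, equivalently the coefficient of $s_{(n)}$ in the graded Frobenius image furnished by Propositions~\ref{x-quotient-structure}, \ref{y-quotient-structure}, and \ref{z-quotient-structure}. For part (1), since $\langle h_\mu, s_{(n)}\rangle = K_{(n),\mu} = 1$ for every $\mu \vdash n$, the invariant Hilbert series reduces to $\sum_\lambda q^{|\lambda|}$ summed over partitions $\lambda$ with $\ell(\lambda) \leq n$ and $\lambda_1 \leq k-1$, which equals ${n+k-1 \brack n}_q$ by the classical generating function for partitions inside a box. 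For parts (2) and (3), the trivial representation $s_{(n)}$ appears only when $\shape(T) = (n)$, i.e.\ when $T$ is the single-row tableau, which has $\maj(T) = \des(T) = 0$; hence the invariant Hilbert series collapses to ${k \brack n}_q$ and to ${n-1 \brack n-k}_q = {n-1 \brack k-1}_q$ respectively, as desired.

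In short, the proof is a direct harvest from the structural results of the previous subsections. The main potential pitfall is the bookkeeping involved in matching the scaling action on $\CC^n$ with the stated combinatorial rotations and shifts, together with verifying that the $\symm_n$- and $\ZZ_k$-actions on each locus commute (so that $\ZZ_k$ genuinely acts on the orbit set $X/\symm_n$). Once those identifications are in place, no genuine computation is needed beyond the classical identity expressing the $q$-binomial coefficient as a generating function for partitions in an $n \times (k-1)$ box.
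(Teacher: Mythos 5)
Your proposal is correct and follows essentially the same route as the paper: apply Theorem~\ref{sieving-generator}(2) with $G = \symm_n$ to the three functional loci and extract the coefficient of $s_{(n)}$ from the graded Frobenius images in Propositions~\ref{x-quotient-structure}, \ref{y-quotient-structure}, and \ref{z-quotient-structure}, using the box-partition identity for ${n+k-1 \brack n}_q$ and the single-row tableau for the other two cases. The only difference is that you spell out the orbit-set identifications and the generator-choice issue for the $\ZZ_k$-action, which the paper handles implicitly just before the theorem statement.
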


Theorem~\ref{compositions-csp} (2) is the `first example' of the CSP of 
Reiner, Stanton, and White \cite{RSW}. 
Theorem~\ref{compositions-csp} (1) and (3) seem to have not been explicitly stated in the literature 
so far.

\begin{proof}
For any graded $\symm_n$-module $V$, the Hilbert series 
$\Hilb( V^{\symm_n}; q)$ of the $\symm_n$-invariant subspace of $V$ is the coefficient
of $s_{(n)}$ in $\grFrob(V;q)$.  
Since the coefficient of $s_{(n)}$ in $h_{\mu}$ is 1 for any partition $\mu \vdash n$, 
Proposition~\ref{x-quotient-structure} yields
\begin{equation}
\Hilb(\CC[\xx_n]/\TT(X_{n,k})^{\symm_n}; q) =
\sum_{\substack{\lambda \text{ a partition} \\ \lambda_1 < k, \, \, \ell(\lambda) \leq n}} q^{|\lambda|} =
{n+k-1 \brack n}_q
\end{equation}
and Theorem~\ref{sieving-generator} (2) finishes the proof of item (1).
For item (2), we extract the coefficient of $s_{(n)}$ in Proposition~\ref{y-quotient-structure} to get
\begin{equation}
\Hilb(\CC[\xx_n]/\TT(Y_{n,k})^{\symm_n}; q) = {k \brack n}_q
\end{equation}
and apply Theorem~\ref{sieving-generator} (2).  For item (3), extracting the coefficient of 
$s_{(n)}$ in Theorem~\ref{z-quotient-structure} yields
\begin{equation}
\Hilb(\CC[\xx_n]/\TT(Z_{n,k})^{\symm_n};q) = {n-1 \brack k-1}_q
\end{equation}
and Theorem~\ref{sieving-generator} (2) finishes the proof.
\end{proof}

\subsection{The subgroup $G = C_n$}

In this section we consider the subgroup $C_n$ of $\symm_n$ generated by the long 
cycle $(1, 2, \dots, n)$. Recall that a {\em necklace} of length $n$ over the bead set $[k]$
is an equivalence class of length $n$ sequences $(b_1, b_2, \dots, b_n)$ of beads in $[k]$
where two such sequences are considered equivalent if they differ by a cyclic shift.
We have the following three families of necklaces over the bead set $[k]$.
\begin{equation}
\begin{cases}
N^X_{n,k} := \{ \text{all length $n$ necklaces with bead set $[k]$} \} \\
N^Y_{n,k} := \{ \text{all length $n$ necklaces with bead set $[k]$ and no repeated beads} \} \\
N^Z_{n,k} := \{ \text{all length $n$ necklaces with bead set $[k]$ in which every bead appears} \}
\end{cases}
\end{equation}
Each of the sets $N^X_{n,k}, N^Y_{n,k}, N^Z_{n,k}$ carries an action of $\ZZ_k$ by bead color rotation:
\begin{equation}
(b_1, b_2, \dots, b_n) \mapsto (b_1 + 1, b_2 + 1, \dots, b_n + 1) \quad \quad 
\text{(letters interpreted modulo $k$)}
\end{equation}
and we have the orbit set identifications
\begin{equation}
X_{n,k}/C_n = N^X_{n,k} \quad \quad 
Y_{n,k}/C_n = N^Y_{n,k} \quad \quad
Z_{n,k}/C_n = N^Z_{n,k}.
\end{equation}

\begin{theorem}
\label{cn-reduction-theorem}
Let $n$ and $k$ be positive integers. The following triples exhibit the cyclic sieving phenomenon.
\begin{enumerate}
\item The triple $(N^X_{n,k}, \ZZ_k, N^X_{n,k}(q))$ where
\begin{equation*}
N^X_{n,k}(q) = \sum_{\substack{\text{$\mu$ a partition }\\ \ell(\mu) \leq n, \, \, \mu_1 < k}} q^{|\mu|} \cdot b_{\mu}
\end{equation*}
where $b_{\mu}$ is the number of length $n$ words $w = w_1 \dots w_n$  
 satisfying $n \mid \maj(w)$ containing $n - \ell(\mu)$ copies of $0$ and $m_i(\mu)$ copies of $i$ 
 for each $i > 0$.
\item The triple $(N^Y_{n,k}, \ZZ_k, N^Y_{n,k}(q))$ where
$$
N^Y_{n,k}(q) = {k \brack n}_q \cdot  \sum_{\lambda \vdash n} a_{\lambda, n} \cdot  f^{\lambda}(q)
$$
where $a_{\lambda, n}$ is the number of standard Young tableaux $T$ of shape $\lambda$ 
with $n \mid \maj(T)$.
\item The triple $(N^Z_{n,k}, \ZZ_k, N^Z_{n,k}(q))$ where
$$
N^Z_{n,k}(q) = \sum_{T \in \SYT(n)} q^{\maj(T)} \cdot {n - \des(T) - 1 \brack n - k}_q 
\cdot a_{\mathrm{shape}(T),n}.
$$
\end{enumerate}
\end{theorem}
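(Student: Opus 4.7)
The plan is to apply Theorem~\ref{sieving-generator}(2) with $W = \symm_n$ and $G = C_n \subset \symm_n$, using the orbit identifications $X_{n,k}/C_n = N^X_{n,k}$, $Y_{n,k}/C_n = N^Y_{n,k}$, $Z_{n,k}/C_n = N^Z_{n,k}$ and observing that the residual $\ZZ_k$-action on each orbit set coincides with bead color rotation on the corresponding necklaces. This reduces all three items to computing $\Hilb((\CC[\xx_n]/\TT(\bullet))^{C_n}; q)$ for the relevant graded quotient ring.

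The central tool is the Kraskiewicz--Weyman theorem: the induced representation $\one \uparrow_{C_n}^{\symm_n}$ has Frobenius image $\sum_{\lambda \vdash n} a_{\lambda, n} \cdot s_\lambda$, where $a_{\lambda,n}$ is the number of standard Young tableaux of shape $\lambda$ with $\maj \equiv 0 \pmod n$. By Frobenius reciprocity, for any graded $\symm_n$-module $V$ with $\grFrob(V;q) = \sum_\lambda c_\lambda(q)\, s_\lambda$ one has
\[
\Hilb(V^{C_n}; q) \;=\; \bigl\langle \grFrob(V;q),\ \one \uparrow_{C_n}^{\symm_n} \bigr\rangle_{\symm_n} \;=\; \sum_{\lambda \vdash n} a_{\lambda, n} \cdot c_\lambda(q).
\]

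Items (2) and (3) follow immediately from this identity by substituting Propositions~\ref{y-quotient-structure} and~\ref{z-quotient-structure}; for (3) one simply reindexes the outer sum over $\lambda \vdash n$ as a sum over $T \in \SYT(n)$ weighted by $a_{\shape(T), n}$. For item (1), substituting Proposition~\ref{x-quotient-structure} and expanding $h_{m(\lambda)} = \sum_\nu K_{\nu, m(\lambda)} s_\nu$ yields
\[
\Hilb\bigl((\CC[\xx_n]/\TT(X_{n,k}))^{C_n}; q\bigr) \;=\; \sum_{\substack{\lambda \text{ a partition} \\ \ell(\lambda) \le n,\ \lambda_1 < k}} q^{|\lambda|} \sum_{\nu \vdash n} K_{\nu, m(\lambda)}\, a_{\nu, n}.
\]
The inner sum counts pairs $(P, Q)$ of a common shape $\nu$ with $P$ semistandard of content $m(\lambda)$ and $Q$ standard with $n \mid \maj(Q)$. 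Invoking the Schensted correspondence exactly as in the proof of Theorem~\ref{word-bicyclic-sieving}(1) bijects these pairs with length-$n$ words whose content has $n - \ell(\lambda)$ zeros and $m_i(\lambda)$ copies of $i$ for $i > 0$ and whose major index is divisible by $n$, i.e.\ with the $b_\lambda$ objects of the theorem.

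The only nontrivial ingredient is the Kraskiewicz--Weyman identity for $\Frob(\one \uparrow_{C_n}^{\symm_n})$; once that is available (either cited directly or rederived from the classical CSP for words under rotation), the remaining work is Frobenius reciprocity plus one RSK bookkeeping step. The sole small subtlety to verify is that the $\ZZ_k$-scaling action on $\CC^n$ descends, after passing to $C_n$-orbits, to bead color rotation on each necklace set, but this is forced by the identification of each locus with a set of words via the root-of-unity coordinates $\omega, \omega^2, \ldots, \omega^k$.
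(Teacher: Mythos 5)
Your proposal is correct and follows essentially the same route as the paper: apply Theorem~\ref{sieving-generator}(2) with $G = C_n$, use the Kra\'skiewicz--Weyman description of $\one \uparrow_{C_n}^{\symm_n}$ together with Frobenius reciprocity to see that the $C_n$-fixed multiplicity of $S^{\lambda}$ is $a_{\lambda,n}$, plug in Propositions~\ref{x-quotient-structure}, \ref{y-quotient-structure}, \ref{z-quotient-structure}, and handle item (1) by Young's rule plus the RSK/Schensted bookkeeping converting $\sum_{\nu} K_{\nu, m(\lambda)} a_{\nu,n}$ into the word count $b_{\lambda}$. The paper's proof is the same argument in slightly more compressed form.
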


\begin{proof}
For $0 \leq j \leq n-1$, let $V_j$ be the linear representation of $C_n$ on which 
$(1, 2, \dots, n-1)$ acts by $\exp(2 \pi i j / n)$.
Kra\'skiewicz and Weyman proved \cite{KW} that the induction of $V_j$ from $C_n$ to $\symm_n$
may be written
\begin{equation}
V_j \uparrow^{\symm_n} \cong \bigoplus_{\maj(T) \equiv j \text{ (mod $n$)}} S^{\mathrm{shape}(T)}
\end{equation}
where the direct sum is over all standard tableaux $T$ with $n$ boxes such that $\maj(T) \equiv j$
modulo $n$.  Specializing at $j = 0$, we see that the dimension of the $C_n$-fixed space of
the $\symm_n$-irreducible $S^{\lambda}$ is 
\begin{equation}
\langle {\bf 1}, S^{\lambda} \downarrow_{C_n} \rangle_{C_n} = 
\langle {\bf 1} \uparrow^{\symm_n}, S^{\lambda} \rangle_{\symm_n} = a_{\lambda, n}
\end{equation}
which proves (2) and (3).  For (1), apply the RSK bijection to see that 
\begin{equation}
\langle {\bf 1}, M^{m(\mu)} \downarrow_{C_n} \rangle_{C_n} = 
\sum_{\lambda \vdash n} K_{\lambda, m(\mu)} \cdot \langle {\bf 1}, S^{\lambda} \downarrow_{C_n} \rangle_{C_n}
= \sum_{\lambda \vdash n} K_{\lambda, m(\mu)} \cdot a_{\lambda,n} = b_{m(\mu),n}.
\end{equation}
\end{proof}

\subsection{The subgroup $G = H_r$}

In this subsection we assume $n = 2r$ is even and let $H_r \subseteq \symm_n$ be the 
subgroup generated by the permutations
\begin{equation*}
(1,2), (3,4), \dots, (n-1, n), (1,3)(2,4), (3,5)(4,6), \dots, (n-3,n-1)(n-2,n).
\end{equation*}
The group $H_r$ is isomorphic to 
the hyperoctohedral group of signed permutations of an $r$-element set.

We consider undirected graphs on the vertex set $[k]$ in which multiple edges and loops are allowed.
An {\em isolated vertex} in such a graph is an element $i \in [k]$ which is not incident to any edge
(so that a vertex which has a loop is not isolated).  We have the following families of graphs on
the vertex set $[k]$.
\begin{equation}
\begin{cases}
Gr^X_{n,k} := \{ \text{all $r$-edge graphs on $[k]$} \} \\
Gr^Y_{n,k} := \{ \text{all $r$-edge loopless graphs on $[k]$ in which each vertex has degree $\leq 1$} \} \\
Gr^Z_{n,k} := \{ \text{all $r$-edge graphs on $[k]$ with no isolated vertices} \}
\end{cases}
\end{equation}
Observe that when $k = n$, the set $Gr^Y_{n,n}$ may be identified with the family of 
perfect matchings on $[n]$.  

Each of the three sets $Gr^X_{n,k}, Gr^Y_{n,k},$ and $Gr^Z_{n,k}$ carries an action of $\ZZ_k$ by 
the vertex-rotating cycle $(1, 2, \dots, k)$. This $\ZZ_k$-action is compatible with the orbit-set 
identifications
\begin{equation}
X_{n,k}/H_r = Gr^X_{n,k} \quad \quad
Y_{n,k}/H_r = Gr^Y_{n,k} \quad \quad
Z_{n,k}/H_r = Gr^Z_{n,k}.
\end{equation}
In the following theorem, a partition $\lambda$ is called {\em even} if each of its parts
$\lambda_1, \lambda_2, \dots $ is even.

\begin{theorem}
\label{hr-reduction-theorem}
We have the following cyclic sieving triples.
\begin{enumerate}
\item  The triple $(Gr_{n,k}^X, \ZZ_k, Gr_{n,k}^X(q))$ exhibits the cyclic sieving phenomenon,
where 
$$Gr_{n,k}^X(q) = \sum_{\substack{\text{partitions $\mu$} \\ \mu \subseteq (k^n) }}
\sum_{\lambda \vdash n \text{ even}}
K_{\lambda, m(\mu)}  \cdot q^{|\mu|} \cdot f^{\lambda}(q).$$
Here $m(\mu) \vdash n$ is the partition obtained by writing the part multiplicities in 
$(\mu_1, \mu_2, \dots, \mu_n)$ in weakly decreasing order and $K_{\lambda, m(\mu)}$ is the 
Kotska number.
\item The triple $(Gr_{n,k}^Y, \ZZ_k, Gr_{n,k}^Y(q))$ exhibits the cyclic sieving phenomenon,
where
$$Gr_{n,k}^Y(q) = {k \brack n}_q \cdot \sum_{\substack{\lambda \vdash n \\ \lambda \text{ even}}} f^{\lambda}(q).$$
\item The triple $(Gr_{n,k}^Z, \ZZ_k, Gr_{n,k}^Z(q))$ exhibits the cyclic sieving phenomenon,
where
$$Gr_{n,k}^Z(q) = \sum_{\substack{T \in \SYT(n) \\ \mathrm{shape}(T) \text{ even}}} q^{\maj(T)} \cdot
{n - \des(T) - 1 \brack n - k}_q.$$
\end{enumerate}
\end{theorem}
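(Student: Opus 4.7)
The plan is to apply Theorem~\ref{sieving-generator}(2) with $G = H_r$ to each of the three loci $X_{n,k}$, $Y_{n,k}$, $Z_{n,k}$, using the graded Frobenius images already supplied by Propositions~\ref{x-quotient-structure}, \ref{y-quotient-structure}, and \ref{z-quotient-structure}. Two preliminary ingredients are needed before the extraction can be carried out: the orbit-set identifications $X_{n,k}/H_r = Gr^X_{n,k}$ (and analogously for $Y, Z$), and a description of the $H_r$-fixed subspace inside an irreducible $\symm_n$-module.

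First I would verify the orbit identifications. The generators $(2i-1, 2i)$ of $H_r$ swap letters within each length-two block $\{2i-1, 2i\}$, while the generators $(2i-1, 2i+1)(2i, 2i+2)$ exchange consecutive blocks. Consequently the $H_r$-orbit of a word $w_1 \cdots w_n \in X_{n,k}$ is determined by the multiset of $r$ unordered pairs $\{ \{w_{2i-1}, w_{2i}\} : 1 \le i \le r \}$, which is precisely an $r$-edge multigraph on vertex set $[k]$ (with loops permitted). Injectivity of the word translates to looplessness together with every vertex having degree $\le 1$, while surjectivity translates to the absence of isolated vertices; the $\ZZ_k$-action by scaling coordinates by $\omega = \exp(2 \pi i / k)$ becomes rotation of vertex labels by the cycle $(1,2,\dots,k)$. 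This matches the three graph families and their $\ZZ_k$-actions in the theorem statement.

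Next I would invoke the classical symmetric-function identity
\[
\Frob\bigl( \mathbf{1} \uparrow^{\symm_n}_{H_r} \bigr) \;=\; \sum_{\substack{\lambda \vdash n \\ \lambda \text{ even}}} s_\lambda,
\]
which expresses the Frobenius image of the permutation representation of $\symm_n$ on perfect matchings of $[n]$. By Frobenius reciprocity one obtains $\dim (S^\lambda)^{H_r} = 1$ when $\lambda$ is even and $0$ otherwise, so for any graded $\symm_n$-module $V$ the Hilbert series of $V^{H_r}$ is read off from $\grFrob(V;q)$ by summing the coefficients of $s_\lambda$ over even partitions $\lambda$. Applying this extraction to the three quoted graded Frobenius formulas yields the polynomials in the theorem: for $Y_{n,k}$, the restriction to even shapes is immediate and produces ${k \brack n}_q \sum_{\lambda \vdash n,\, \lambda \text{ even}} f^\lambda(q)$; for $Z_{n,k}$, one simply restricts the sum over $T \in \SYT(n)$ to tableaux of even shape; for $X_{n,k}$, one first applies Young's rule $h_{m(\mu)} = \sum_\lambda K_{\lambda, m(\mu)} s_\lambda$ to rewrite Proposition~\ref{x-quotient-structure} in the Schur basis and then extracts even-$\lambda$ terms. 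A final appeal to Theorem~\ref{sieving-generator}(2) converts each $H_r$-invariant Hilbert series into the asserted cyclic sieving statement.

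The argument is essentially mechanical once the two ingredients above are assembled; the main technical point will be the bookkeeping in part (1), where the $h$-to-$s$ expansion must be combined carefully with the even-shape restriction, and one must track the range of the indexing partitions $\mu$ inherited from Proposition~\ref{x-quotient-structure} in order to match the stated formula.
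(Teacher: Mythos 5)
Your proposal is correct and follows essentially the same route as the paper: the plethystic/perfect-matching identity $\Frob(\mathbf{1}\uparrow_{H_r}^{\symm_n}) = \sum_{\lambda \text{ even}} s_\lambda$, Frobenius reciprocity to see that $\dim (S^\lambda)^{H_r}$ is $1$ or $0$ according to whether $\lambda$ is even, extraction of even-shape coefficients from Propositions~\ref{x-quotient-structure}, \ref{y-quotient-structure}, \ref{z-quotient-structure} (with Young's rule in case (1)), and an appeal to Theorem~\ref{sieving-generator}(2). Your explicit verification of the orbit identifications $X_{n,k}/H_r = Gr^X_{n,k}$, etc., is a welcome detail the paper only asserts, but the argument is the same.
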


This result should be compared with 
a result of Berget, Eu, and Reiner. In \cite[Theorem 5 (i)]{BER}
they prove a result equivalent to Theorem~\ref{hr-reduction-theorem} (1).
Their proof, like ours, uses the symmetric function operation of {\em plethysm}.

\begin{proof}
For any value of $d$, the wreath product $\symm_d \wr \symm_r$ embeds naturally inside the larger
 symmetric group $\symm_{dr}$.  The Frobenius image of the corresponding coset representation
 may be expressed using plethysm as 
\begin{equation}
\Frob({\bf 1} \uparrow_{\symm_d \wr \symm_r}^{\symm_{dr}}) =  \Frob(\CC[ \symm_{dr} / \symm_d \wr \symm_r ])
 = h_d[h_r].
\end{equation}
Finding the Schur expansion of $h_d[h_r]$ is difficult in general, and is closely related to
{\em Thrall's Problem}. In the special case $d = 2$, we may identify $H_r \cong \symm_2 \wr \symm_r$ and 
we have 
\begin{equation}
h_2[h_r] = \sum_{\substack{\lambda \vdash n \\ \lambda \text{ even}}} s_{\lambda}
\end{equation}
where $n = 2r$.
Applying Frobenius Reciprocity, for any $\lambda \vdash n$ the dimension of the $H_r$-fixed subspace of 
the $\symm_n$-irreducible $S^{\lambda}$ is given by the character inner product:
\begin{equation}
\dim (S^{\lambda})^{H_r} = \langle {\bf 1}, \chi^{\lambda} \downarrow^{\symm_n}_{H_r} \rangle_{H_r} = 
\langle {\bf 1} \uparrow_{H_r}^{\symm_n}, \chi^{\lambda} \rangle_{\symm_n} = \begin{cases}
1 & \text{$\lambda$ is even} \\
0 & \text{$\lambda$ is not even}
\end{cases}
\end{equation}
where we use ${\bf 1}$ for the trivial character of $H_r$.
Correspondingly, if $V$ is any graded $\symm_n$-module with graded Frobenius image 
\begin{equation}
\grFrob(V; q) = \sum_{\lambda \vdash n} c_{\lambda}(q) s_{\lambda},
\end{equation}
the Hilbert series of the $H_r$-fixed subspace will be
\begin{equation}
\Hilb(V^{H_r}; q) = \sum_{\substack{\lambda \vdash n \\ \lambda \text{ even}}} c_{\lambda}(q).
\end{equation}
The polynomials $Gr^X_{n,k}(q), Gr^Y_{n,k}(q), Gr^Z_{n,k}(q)$ are obtained in this way
from Propositions~\ref{x-quotient-structure}, \ref{y-quotient-structure}, and
\ref{z-quotient-structure}.
\end{proof}

\section{The Tanisaki and Springer Loci}
\label{Other}

\subsection{The Tanisaki Locus}
\label{Tanisaki}

Throughout this subsection, we fix 
a weak composition $\mu = (\mu_1, \dots, \mu_k) \models_0 n$ of $n$ into $k$ parts which satisfies
$\mu_i = \mu_{i+a}$ for all $i$, where indices are interpreted modulo $k$. 
If $\omega := \exp(2 \pi i/k)$, define the {\em Tanisaki locus} $X_{\mu} \subseteq \CC^n$ by
\begin{equation}
X_{\mu} := \{ (\alpha_1, \dots, \alpha_n) \,:\, \alpha_j = \omega^i \text{ for precisely $\mu_i$ values of $j$} \}.
\end{equation}
As discussed in Subsection~\ref{MN-subsection}, Garsia-Procesi \cite{GP} proved that 
$\TT(X_{\mu}) = I_{\mu}$ (the Tanisaki ideal) and 
$\grFrob(\CC[\xx_n]/\TT(X_{\mu}); q) = \widetilde{H}_{\mu}(\xx;q)$ (the Hall-Littlewood symmetric function).
We have the following bicyclic sieving result.

\begin{theorem}
\label{word-bicyclic}
Let $W_{\mu}$ be the set of length $n$ words $w_1 \dots w_n$ which contain $\mu_i$ copies of the letter $i$
for each $i = 1, 2, \dots k$.
The set $W_{\mu}$ carries an action of 
$\ZZ_n \times \ZZ_{k/a}$, where $\ZZ_n$ acts by word rotation
$w_1 w_2 \dots w_n \mapsto w_2 \dots w_n w_1$ and $\ZZ_{k/a}$ acts by
$w_1 \dots w_n \mapsto (w_1 + \ell(\mu)/a) \dots (w_n + \ell(\mu)/a)$ where letter values are interpreted
modulo $k$. The triple $(W_{\mu}, \ZZ_n \times \ZZ_{k/a}, X(q,t))$ exhibits the 
bicyclic sieving phenomenon, where
\begin{equation}
X_{\mu}(q,t) = \sum_{\lambda \vdash n} 
\widetilde{K}_{\lambda,\mathrm{sort}(\mu)}(q) \cdot f^{\lambda}(t)
\end{equation}
and $\mathrm{sort}(\mu)$ is the partition obtained by sorting the parts of $\mu$ into weakly decreasing order.
\end{theorem}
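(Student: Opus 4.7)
The plan is to apply Theorem~\ref{sieving-generator}(1) with $W = \symm_n$ (acting on $\CC^n$ by coordinate permutation), with the regular element $c' = (1,2,\dots,n) \in \symm_n$ (a long cycle, of order $n$), with $C' = \langle c' \rangle \cong \ZZ_n$, and with $C = \ZZ_{k/a}$ whose distinguished generator acts on $\CC^n$ by scalar multiplication by $\omega^a$, where $\omega = \exp(2\pi i /k)$. The underlying point locus will be the Tanisaki locus $X_\mu \subseteq \CC^n$ already defined in Subsection~\ref{MN-subsection}.

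The first task is to transfer the combinatorial $\ZZ_n \times \ZZ_{k/a}$-action on $W_\mu$ to the geometric action of $C' \times C$ on $X_\mu$. Using the Garsia--Procesi identification $w_1 \cdots w_n \leftrightarrow (\omega^{w_1}, \dots, \omega^{w_n})$ from item (3) of Subsection~\ref{MN-subsection}, the coordinate-permutation action of $\symm_n$ corresponds to position-permutation of words, so the long cycle $c'$ acts by cyclic rotation of positions on $W_\mu$. Scaling a point $(\omega^{w_1}, \dots, \omega^{w_n})$ by $\omega^a$ produces $(\omega^{w_1 + a}, \dots, \omega^{w_n + a})$; the hypothesis $\mu_i = \mu_{i+a}$ ensures this stays in $X_\mu$, and corresponds to the natural letter-shift generator of $\ZZ_{k/a}$ on $W_\mu$ (matching the Morita--Nakajima convention recalled in Subsection~\ref{MN-subsection}). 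Consequently $X_\mu$ is a $\symm_n \times C$-stable locus and the bijection $W_\mu \leftrightarrow X_\mu$ is $\symm_n \times C' \times C$-equivariant.

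Once this set-up is in place, the substance of the proof is already done for us. Orbit harmonics together with the Garsia--Procesi identification $\TT(X_\mu) = I_\mu$ (item (3) of Subsection~\ref{MN-subsection}) yields
\begin{equation*}
\grFrob(\CC[\xx_n]/\TT(X_\mu); q) = \widetilde{H}_\mu(\xx;q) = \sum_{\lambda \vdash n} \widetilde{K}_{\lambda,\mathrm{sort}(\mu)}(q) \cdot s_\lambda(\xx),
\end{equation*}
so the graded multiplicity of $S^\lambda$ in $\CC[\xx_n]/\TT(X_\mu)$ is $m_{S^\lambda}(q) = \widetilde{K}_{\lambda,\mathrm{sort}(\mu)}(q)$. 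Feeding this into Theorem~\ref{sieving-generator}(1), and using the self-duality $(S^\lambda)^* \cong S^\lambda$ of symmetric-group irreducibles to replace $f^{(S^\lambda)^*}(t)$ by $f^\lambda(t)$, gives the claimed
\begin{equation*}
X_\mu(q,t) = \sum_{\lambda \vdash n} \widetilde{K}_{\lambda,\mathrm{sort}(\mu)}(q) \cdot f^\lambda(t).
\end{equation*}

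The main hazard is purely one of bookkeeping: correctly matching the two commuting cyclic actions on $\CC^n$ (scalar multiplication by a root of unity for $C$, and the long-cycle permutation for $C'$) with the stated letter-shift and position-rotation actions on $W_\mu$, with careful attention to the orientation and sign conventions of Remark~\ref{springer-remark}. After that identification is pinned down, the theorem follows immediately from Theorem~\ref{sieving-generator}(1) combined with the Garsia--Procesi Hall--Littlewood formula; in particular, none of the root-of-unity specializations of $\widetilde{H}_\mu(\xx;q)$ invoked in the original proof of \cite{MoritaNakajima} are required.
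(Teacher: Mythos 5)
Your proposal is correct and follows essentially the same route as the paper: apply Theorem~\ref{sieving-generator}(1) to the Tanisaki locus $X_{\mu}$ and use the Garsia--Procesi identification $\grFrob(\CC[\xx_n]/\TT(X_{\mu});q) = \widetilde{H}_{\mu}(\xx;q)$ together with its Schur expansion into Kostka--Foulkes polynomials. The only difference is that you spell out the equivariance bookkeeping (matching the scalar action of $\ZZ_{k/a}$ and the long-cycle action of $\ZZ_n$ with letter-shift and position-rotation on $W_{\mu}$, and invoking self-duality of $S^{\lambda}$), which the paper leaves implicit.
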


\begin{proof}
Apply Theorem~\ref{sieving-generator} (1) to the point locus $X_{\mu}$ and use the Schur expansion
\begin{equation}
\widetilde{H}_{\nu}(\xx;q) = \sum_{\lambda \vdash n} \widetilde{K}_{\lambda, \nu}(q) \cdot s_{\lambda}(\xx)
\end{equation}
of the Hall-Littlewood polynomials.
\end{proof}

Theorem~\ref{word-bicyclic} was proven in unpublished work of Reiner and White.
A proof of Theorem~\ref{word-bicyclic} using Theorem~\ref{mn-theorem} may by found in
\cite{RhoadesFixed}.

Given a subgroup $G \subseteq \symm_n$,
what happens when we apply Theorem~\ref{sieving-generator} (2) to $Y_{\mu}$? By reasoning analogous to that
in Section~\ref{Functional} we obtain the following CSPs.

\begin{example}
 When $G = \symm_n$, the locus $X_{\mu}$ is a single $G$-orbit.
The $\symm_n$-invariant part of $R_{\mu}$ is simply the ground field in degree 0, so we get the trivial CSP
$( \{ * \}, \ZZ_{k/a}, 1)$ for the action of $\ZZ_k$ on a one-point set.
\end{example}

\begin{example}
When $G = C_n$, we may identify $X_{\mu}/G$ with the family of $n$-bead necklaces
$(b_1, \dots, b_n)$ with $\mu_i$
copies of the bead of color $i$. The cyclic group $\ZZ_{k/a}$ acts on these necklaces by $a$-fold color rotation
$b_i \mapsto b_i + a$ and
we get a CSP $(X_{\mu}/G, \ZZ_{k/a}, X(q))$ where 
\begin{equation}
X(q) = \sum_{\lambda \vdash n} \widetilde{K}_{\lambda,\mathrm{sort}(\mu)}(q)
\cdot a_{\lambda,n}
\end{equation}
and $a_{\lambda,n}$ is the number of standard tableaux $T$ of shape $\lambda$ with $n \mid \maj(T)$.
\end{example}

\begin{example}
When $n = 2r$ is even and $G = H_r$, we may identify $X_{\mu}/G$ with the family of graphs
(loops and multiple edges permitted)
on the vertex set $[k]$ where the vertex $i$ has degree $\mu_i$ (here a loop contributes two to the degree
of its vertex).  The orbit set $X_{\mu}/G$ is acted upon by $\ZZ_{k/a}$ by $a$-fold vertex rotation, and the triple
$(X_{\mu}/G, \ZZ_{k/a}, X(q))$ exhibits the CSP where
\begin{equation}
X(q) = \sum_{\substack{\lambda \vdash n \\ \lambda \text{ even}}} 
\widetilde{K}_{\lambda,\mathrm{sort}(\mu)}(q).
\end{equation}
\end{example}

\subsection{The Springer Locus}
\label{Springer}
In this subsection we return to the setting of an arbitrary complex reflection group $W \subseteq \GL_n(\CC)$
acting on $V := \CC^n$.  
We fix a regular element $c \in W$ with regular eigenvector $v \in V$ and corresponding regular 
eigenvalue $\omega \in \CC$, so that $c \cdot v = \omega v$. We also let $C := \langle c \rangle$ be the subgroup
of $W$ generated by $c$.

Recall that the Springer locus is the $W$-orbit
$W \cdot v = \{ w \cdot v \,:\, w \in W \} \subseteq V$.
Subsection~\ref{springer-subsection} shows that the Springer locus is closed under the action
of the group $W \times C$, where $W$ acts by its natural action on $V$ and $C$ acts by the rule
$c: v' \mapsto \omega v'$ for all $v' \in V$.
(Note that this is different from the $\circ$-action $c \circ v' := \omega^{-1} v'$ of $C$ considered
in Subsection~\ref{springer-subsection}.)

\begin{theorem}
\label{springer-bicsp}
Let $c, c' \in W$ be regular elements and let $C = \langle c \rangle, C' = \langle c' \rangle$ be the cyclic 
subgroups which they generate. The product of cyclic groups $C \times C'$ acts on $W$ by the rule
$(c, c') \cdot w := c' w c$. The triple $(W, C \times C', W(q,t))$ exhibits the bicyclic sieving phenomenon where
\begin{equation}
W(q,t) := \sum_{U} f^U(q) \cdot f^{U^*}(t)
\end{equation}
and the sum is over all (isomorphism classes of) irreducible $W$-modules $U$.
\end{theorem}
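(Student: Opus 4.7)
The plan is to apply Theorem~\ref{sieving-generator} (1) to the Springer locus $X := W \cdot v \subseteq \CC^n$, where $v$ is a regular eigenvector of $c$ with eigenvalue $\omega \in \CC^\times$. This reduces the theorem to a direct computation of the graded $W$-module structure of $\CC[\xx_n]/\TT(X)$ combined with translating the natural action on the locus back to the action on $W$.

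First I would invoke Subsection~\ref{springer-subsection} in two ways. On the combinatorial side, the map $w \mapsto w \cdot v$ gives a bijection $W \xrightarrow{\sim} W \cdot v$ which intertwines left multiplication in $W$ with the natural $W$-action on $\CC^n$, and (by regularity of $v$) also intertwines right multiplication by $c^{\pm 1}$ with scaling on $\CC^n$. Thus the locus $X$ is stable under $W \times C$, and the action of $C' \times C$ on $X$ via (left multiplication by $c' \in W$, scaling by $\omega$) corresponds under the bijection to the action on $W$ given by $(c^r,(c')^s)\cdot w = (c')^s w c^r$, which is the action described in the theorem. On the algebraic side, Chevalley's theorem plus a dimension count (\emph{cf.}~Equation~\eqref{springer-ideal-equality}) gives $\TT(W \cdot v) = \langle f_1, \dots, f_n \rangle$, so
\begin{equation*}
\CC[\xx_n]/\TT(X) = R_W
\end{equation*}
as graded $W$-modules. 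By the very definition of the fake degree polynomial, the graded multiplicity of an irreducible $W$-module $U$ in $R_W$ equals $f^U(q)$, so in the notation of Theorem~\ref{sieving-generator} we have $m_U(q) = f^U(q)$.

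Now I would plug everything into Theorem~\ref{sieving-generator} (1), taking the regular element (in the role of the theorem's $c'$) to be our $c'$, and the scaling cyclic group (in the role of the theorem's $c$) to be $C = \langle c \rangle$. This immediately produces a bicyclic sieving triple $(X, C \times C', X(q,t))$ with
\begin{equation*}
X(q,t) = \sum_{U \in \mathrm{Irr}(W)} m_U(q) \cdot f^{U^*}(t) = \sum_{U \in \mathrm{Irr}(W)} f^U(q) \cdot f^{U^*}(t) = W(q,t),
\end{equation*}
and transporting along the bijection $W \xrightarrow{\sim} W \cdot v$ gives the statement of the theorem. The only genuine subtlety, which I expect to be the main bookkeeping obstacle (but not a mathematical one), is choosing the sign conventions (scaling by $\omega$ versus $\omega^{-1}$) so that the induced action on $W$ is exactly $(c,c') \cdot w = c' w c$ rather than its inverse variant; since both $c$ and $c'$ are regular, the two conventions are related by an outer symmetry that does not affect the sieving evaluation, so either choice suffices.
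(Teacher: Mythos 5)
Your proposal is correct and is essentially the paper's own proof: identify $\CC[\xx_n]/\TT(W \cdot v)$ with $R_W$ via Chevalley plus the dimension count, check that $w \mapsto w \cdot v$ intertwines scaling by $\omega$ with right multiplication by $c$ (so the induced action on $W$ is $(c,c') \cdot w = c'wc$), and apply Theorem~\ref{sieving-generator}~(1) with $m_U(q) = f^U(q)$. One caveat: your closing remark that the $\omega$ versus $\omega^{-1}$ convention is immaterial is not quite right in general --- $c^r$ and $c^{-r}$ need not be conjugate and $f^U \neq f^{U^*}$ in general, which is exactly why the paper fixes the scaling-by-$\omega$ action rather than the $\circ$-action of Subsection~\ref{springer-subsection} --- but your main argument already makes the correct choice, so this does not affect the proof.
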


\begin{proof}
The discussion in Subsection~\ref{springer-subsection} shows that the homogeneous quotient
$\CC[V]/\TT(W \cdot v)$ attached to the Springer locus $W \cdot v \subseteq V$ is given by the coinvariant ring
\begin{equation}
R_W = \CC[V]/\TT(W \cdot v).
\end{equation}
The map $W \rightarrow W \cdot v$ given by
$w \mapsto w \cdot v$ is a $W \times C$-equivariant bijection.
Indeed, the generator $c$ of the group $C$ acts on $w \cdot v \in W \cdot v$ by
\begin{equation*}
c: w \cdot v \mapsto \omega ( w \cdot v) = w \cdot (\omega v) = w \cdot (c \cdot v) = (wc) \cdot v
\end{equation*}
which agrees with the action of $C$ on $W$.
By definition, 
the fake degree polynomial $f^{U}(q)$ is the graded multiplicity of $U$ in the $W$-module $R_W$.
Now apply Theorem~\ref{sieving-generator} (1).
\end{proof}

Theorem~\ref{springer-bicsp} is a result of Barcelo, Reiner, and Stanton
\cite[Thm. 1.4]{BRS}. 
In \cite{BRS} the polynomial $W(q,t)$ is referred to as a {\em bimahonian distribution}.
More generally, Barcelo, Reiner, and Stanton consider `Galois twisted' actions 
of $C \times C'$ on $W$ as follows. Let $d$ be the order of the regular element $c' \in W$ and let 
$s$ be an integer coprime to $d$.  The group $C \times C'$ acts on $W$ by the rule
\begin{equation}
\label{twisted-action}
(c, c') := (c')^s \cdot w \cdot c.
\end{equation} 
If we let $\zeta := \exp(2 \pi i / d)$, there is a unique Galois automorphism
$\sigma \in \mathrm{Gal}(\QQ[\zeta]/\QQ)$ satisfying $\sigma(\zeta) = \zeta^s$. 
Furthermore, if $U$ is any $W$-module, there is a $W$-module $\sigma(U)$ obtained by 
applying $\sigma$ entrywise to the matrices representing group elements in the action of $W$ on $U$.
The operation $U \mapsto \sigma(U)$ preserves the irreducibility of $W$-modules.

In \cite[Thm. 1.4]{BRS} Barcelo, Reiner, and Stanton prove that $(W, C \times C', W^{\sigma}(q,t))$ exhibits
the biCSP where $C \times C'$ acts by the Galois-twisted action of \eqref{twisted-action} and 
$W^{\sigma}(q,t)$ is the {\em $\sigma$-bimahonian distribution} given by 
\begin{equation}
W^{\sigma}(q,t) := \sum_U f^U(q) \cdot f^{\sigma^{-1}(U)}(t)
\end{equation}
where $U$ runs over all isomorphism classes of irreducible $W$ modules.
This more general biCSP can also be proven using orbit harmonics; one 
observes that $(c')^s \in C'$ is a regular element in $W$ with regular eigenvalue 
$\zeta^s$ and applies Theorem~\ref{springer-bicsp} with $c' \mapsto (c')^s$.

\section{Conclusion}
\label{Conclusion}

In this paper we described the orbit harmonics method of proving CSPs by modeling the set $X$
in question as a point locus in a $\CC$-vector space $V$.
We applied our results to the functional loci, the Springer Locus, and the Tanisaki Locus.

The orbit harmonics method has implicitly been used to prove CSPs before; we refer the reader to \cite{ARR}
for all undefined terms in what follows.
Let $W$ be an irreducible real reflection group acting on its (complexified) reflection representation $V$.
In \cite{ARR}, Armstrong, Reiner, and Rhoades define a {\em parking locus} $V^{\Theta} \subseteq V$
of degree $(h+1)^{\dim V}$
which carries an action of $W \times \ZZ_h$ where $h$ is the Coxeter number of $W$ and $\ZZ_h$ acts on $V$
by root-of-unity scaling.
The locus $V^{\Theta}$ depends on a choice of  homogeneous system of parameters $\Theta$ of degree
$h+1$ carrying $V^*$. 

In \cite{ARR, RhoadesEvidence}, the orbit harmonics method is (implicitly)
applied to the parking locus $V^{\Theta}$ to prove the following CSP of Bessis and Reiner.
Let $\mathrm{NC}_W$ be the set of $W$-noncrossing partitions with its cyclic action of $\ZZ_h$ by generalized
rotation. (In type A$_{n-1}$, this is simply the set of noncrossing set partitions of $[n]$ with the rotation
action of $\ZZ_n$.)  
Bessis and Reiner proved \cite{BessisReiner}
that the triple $(\mathrm{NC}_W, \ZZ_h, \mathrm{Cat}_q(W))$ exhibits the CSP
where $\mathrm{Cat}_q(W)$ is the {\em $q$-$W$-Catalan number} given by 
$\mathrm{Cat}_q(W) = \prod_{i = 1}^{\dim V} \frac{[d_i + h]_q}{[d_i]_q}$ where $d_1, d_2, \dots $
are the invariant degrees of $W$.
On the level of orbit harmonics, this corresponds to looking at the $W$-orbits $V^{\Theta}/W$
in the parking locus $V^{\Theta}$.
Whereas the proof of Bessis-Reiner was a brute force enumration, the proof in \cite{ARR, RhoadesEvidence}
is algebraic (since it uses orbit harmonics).

There are many interesting point loci $X \subseteq V$ to which the method of orbit harmonics has been applied 
to study the quotient $\CC[V]/\TT(X)$.  This paper describes a technique for obtaining sieving results in
any such situation.
One interesting recent locus which we did not consider in this paper is due to Griffin \cite{Griffin}.
This locus $X$ is in bijection with a certain family of ordered set partitions and is a common generalization
of the Tanisaki locus studied in \cite{GP, Tanisaki} and the surjective functional locus studied in 
\cite{HRS}. 
Furthermore, while most of our loci were stable under the action of $\symm_n$, it is natural to ask search for enumerative results concerning loci closed under other groups $W$.
We leave the sieving study of other interesting combinatorial loci to future work.

\section{Acknowledgements}

J. Oh was supported by the Basic Science Research Program through the National
Research Foundation of Korea (NRF) funded by the 
Ministry of Education (2020R1A6A3A13076804).
B. Rhoades was partially supported by NSF Grant DMS-1500838 and DMS-1953781.
The authors are grateful to Vic Reiner and Josh Swanson for helpful conversations.


\begin{thebibliography}{99}
 
 
 \bibitem{ARR} D. Armstrong, B. Rhoades, and V. Reiner. 
 Parking spaces. {\em Adv. Math.}, {\bf 269} (2015), 647--706.

\bibitem{BRS} H. Barcelo, V. Reiner, and D. Stanton. Bimahonian distributions.
{\em J. London Math. Soc.}, {\bf 77 (3)} (2008), 627--646.


\bibitem{BER}  A. Berget, S.-P. Eu, and V. Reiner. 
Constructions for cyclic sieving phenomena.
{\em SIAM J. Discrete Math}, {\bf 25} (2011), 1297--1314.

\bibitem{BessisReiner} D. Bessis and V. Reiner.
Cyclic sieving of noncrossing partitions.
{\em Ann. Combin.}, {\bf 15} (2011), 197--222.




\bibitem{Chevalley} C. Chevalley.
Invariants of finite groups generated by reflections.
{\em Amer. J. Math.}, {\bf 77 (4)} (1955), 778--782.

\bibitem{CE} T. Chmutova and P. Etingof.
On some representations of the rational Cherednik algebra.
{\em Represent. Theory}, {\bf 7} (2003), 641--650.

\bibitem{D} T. Douvropoulos. Cyclic sieving for reduced reflection factorizations 
of the Coxeter element.
{\em S\'em. Loth. Combin.}, {\bf 80B} (2018), Article \# 86, 12 pp.


\bibitem{GP} A. M. Garsia and C. Procesi. On certain graded $S_n$-modules and the $q$-Kostka
polynomials. {\em Adv. Math.}, {\bf 94} (1992), 82--138.

\bibitem{Griffin} S. Griffin.
Ordered set partitions, Garsia-Procesi modules, and rank varieties.
To appear, {\em Trans. Amer. Math. Soc.}, 2020.
{\tt arXiv:2004.00788}.

\bibitem{HRS}  J. Haglund, B. Rhoades, and M. Shimozono.  Ordered
set partitions, generalized coinvariant algebras, and the Delta Conjecture.
{\it Adv. Math.}, {\bf 329} (2018), 851--915.

\bibitem{Kostant} B. Kostant. Lie group representations on polynomial rings.
{\it Bull. Amer. Math. Soc.}, {\bf 69 (4)} (1963), 518--526.

\bibitem{KW} W. Kra\'skiewicz and J. Weyman. Algebra of coinvariants and the action 
of a Coxeter element. {\em Bayreuth. Math. Schr.}, {\bf 63} (2001), 265--284.

\bibitem{LLT} A. Lascoux, B. Leclerc, and J.-Y. Thibon.
Green polynomials and Hall-Littlewood functions at roots of unity.
{\em European J. Combin.}, {\bf 15} (1994), 173--180.

\bibitem{MoritaNakajima} H. Morita and T. Nakajima.
A formula of Lascoux-Leclerc-Thibon and representations of symmetric groups.
{\em J. Algebraic Combin.}, {\bf 24 (1)} (2006), 45--60.

\bibitem{RS} S. Rao and J. Suk. Dihedral sieving phenomena.
{\em Discrete Math.}, {\bf 343 (6)} (2020), 111849.

\bibitem{RSW} V. Reiner, D. Stanton, and D. White. The cyclic sieving phenomenon.
{\em J. Combin. Theory Ser. A}, {\bf 108} (2004), 17--50.

\bibitem{RhoadesFixed} B. Rhoades.
 Hall-Littlewood polynomials and fixed point enumeration.
{\em Discrete Math.}, {\bf 310 (4)} (2010), 869--876.

\bibitem{RhoadesEvidence} B. Rhoades.
Evidence for parking conjectures.
{\em J. Combin. Theory Ser. A},
{\bf 146} (2017), 81--127.

\bibitem{Springer} T. A. Springer. Regular elements of finite reflection groups.
{\em Invent. Math.}, {\bf 29} (1974), 159--198.

\bibitem{Stanley} R. P. Stanley. Invariants of finite groups and their applications to combinatorics.
{\em Bull. Amer. Math. Soc.}, {\bf 1} (1979), 475--511.

\bibitem{Swanson} J. Swanson. The dihedral sieving phenomenon.
In preparation, 2020.

\bibitem{Tanisaki} T. Tanisaki.
Defining ideals of the closures of conjugacy classes and representations of the 
Weyl groups. {\em Tohoku J. Math.}, {\bf 34} (1982), 575--585.




  
\end{thebibliography}
\end{document}